\newcommand{\comment}[1]{}
\newtheorem{theorem}{Theorem}
\newtheorem{lemma}[theorem]{Lemma}
\newtheorem{proposition}[theorem]{Proposition}
\newtheorem{corollary}[theorem]{Corollary}
\theoremstyle{definition}
\newtheorem{definition}[theorem]{Definition}
\newtheorem{example}[theorem]{Example}
\newcommand{\Z}{\mathbb{Z}}
\newcommand{\cross}{\times}
\newcommand{\D}{\mathbb{D}}
\newcommand{\PD}{\mathbb{PD}}
\begin{document}
\title
     {Links and Planar Diagram Codes}
     
\author{Matt Mastin}
\address{Department of Mathematics,
Wake Forest University,
Winston-Salem, NC 27106}
\email{mastinjm@wfu.edu}

\begin{abstract} 
In this paper we formalize a combinatorial object for describing link diagrams called a Planar Diagram Code. PD-codes are used by the KnotTheory Mathematica package developed by Bar-Natan, et al. We present the set of PD-codes as a stand alone object and discuss its relationship with link diagrams. We give an explicit algorithm for reconstructing a knot diagram on a surface from a PD-code. We also discuss the intrinsic symmetries of PD-codes (i.e., invertibility and chirality). The moves analogous to the Reidemeister moves are also explored, and we show that the given set of PD-codes modulo these combinatorial Reidemeister moves is equivalent to classical link theory.
\end{abstract}
\date{\today}

\maketitle
\section[Introduction]{Introduction}

While the connection between Gauss codes and link diagrams has been widely examined in the literature (see for example chapter $17$ of \cite{GraphBook}), not much attention has been given to alternate combinatorial descriptions of link diagrams.  The goal of this paper is to formalize another combinatorial object called a Planar Diagram Code (PD-code) which can also represent link diagrams. We will take the approach of defining PD-codes as stand alone objects and then present their correspondence with link diagrams. This perspective has the advantage of producing a more general set of objects than the classical link diagrams. Similar observations have been made such as the ones by Kauffman \cite{VKnots1}, Nelson \cite{Nel1}, and Jablan, et al. \cite{JRR}. We will also give explicit algorithms for reconstructing the link diagram on a surface from a PD-code and discuss what information is lost when describing a diagram with a PD-code. A similar algorithm for reconstructing a diagram from a Gauss code has been discussed by Kauffman \cite{VKnots1}.

PD-codes and Gauss codes are closely related; each contains enough information to reconstruct a link diagram. However, PD-codes can be more useful for computation because the combinatorial information is more explicit. This is evidenced by the fact that the KnotTheory Mathematica package \cite{KnotAt} developed by Dror Bar-Natan and his students makes use of PD-codes for presenting link diagrams.

Section \ref{sect:Diags} will give the basic definitions of link diagrams and PD-codes, while section \ref{sect:Graphs} describes the correspondence between PD-codes and link diagrams. We will then discuss the analogues of the Reidemeister moves in section \ref{sect:RMoves} . In section \ref{sect:PDAct} we will discuss the intrinsic symmetries of PD-codes and then finish with a section on future directions.

\section[ Link Diagrams PD-Codes]{ Link Diagrams and PD-Codes}\label{sect:Diags}

We begin with our definition of an oriented and labeled link diagram. This definition is more or less standard, but we will be using the language of both graphs and link diagrams. The following will serve as our dictionary between these two viewpoints.

\begin{definition}\label{def:diagram}
Let $S$ be a smooth, closed, oriented, not necessarily connected, surface. A \textbf{labeled, oriented link diagram} on $S$ is a smooth oriented immersion $d:\sqcup_n S^1 \rightarrow S$ of $n$ disjoint circles into the surface with finitely many transverse double self-intersections and no other self-intersections. We take the circles $\sqcup_n S^1$ to be ordered. Call the self intersections of the immersion  \textbf{vertices} and the arcs between vertices \textbf{edges}. So, a diagram is in particular an embedded $4$-regular graph on $S$. A diagram also includes a labeling of the edges by pairs $(i,j)$ where $i$ is the index of the $S^1$ which contains the edge and the $j$'s give a cyclic ordering of the edges. So, a diagram is in particular a $4$-regular graph (a graph in which all vertices have degree $4$) on $S$ with with a preferred covering by oriented circuits. In the case of a knot ($n=1$) we will omit the first element in the pair.  The vertices will also be called \textbf{crossings} and are equipped with a coloring by the set $\{1,-1\}$ called the sign of the crossing. Two diagrams are equivalent if there is an isotopy of $S$ which brings one diagram to the other respecting the labeling of the edges and the signs of the crossings. We will denote the set of equivalence classes of diagrams by $\D$. A link diagram is \textbf{split} if one can embed a circle $\gamma$ in $S$, disjoint from the diagram, so that each connected component of $S \setminus \gamma$ contains part of the diagram.

Edges that are oriented toward the vertex will be called incoming edges and the others will be called outgoing. Each pair of non-adjacent edges will be referred to as either the \textbf{over-edges} or the \textbf{under-edges} as determined by the sign of the crossing as shown in Figure~\ref{fig:pushoff}. 
\end{definition}

It is a standard result that a link in a thickened surface $S \times [-1,1]$ can be recovered from a link diagram on $S$. The more interesting question of recovering a link from combinatorial data will be covered in the following section. But, first we will discuss how to encode the information contained in a link diagram in a PD-code.

\begin{figure}
\begin{center}
\begin{overpic}[height=5cm]{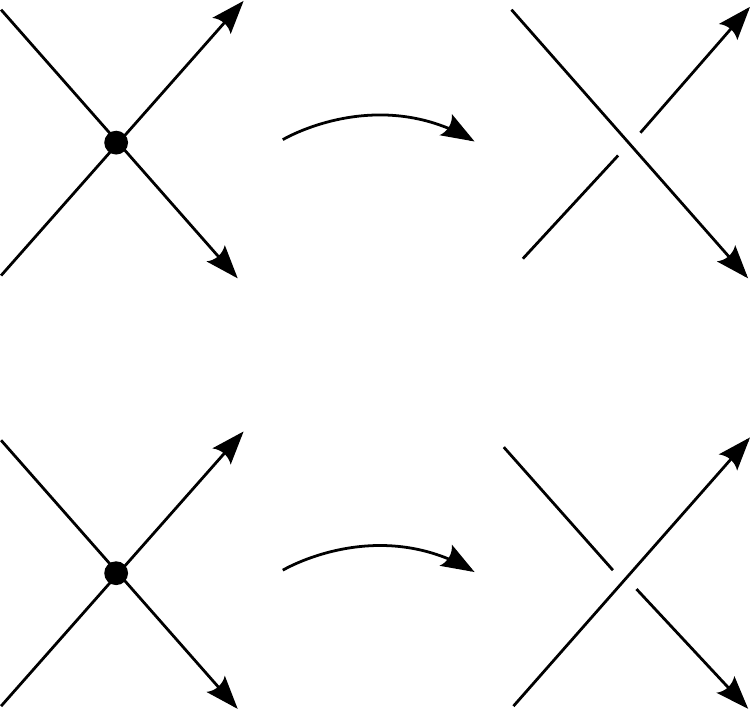}
\put(20,17){$-1$}
\put(20,75){$+1$}
\end{overpic}
\end{center}
\caption{\label{fig:pushoff} Over and under crossings. Here the orientation is taken to be the standard orientation of the plane of the page.}
\end{figure}

A PD-code is a set comprised of quadruples, where each entry in a quadruple may be viewed as an edge in a link diagram and each quadruple corresponds to a crossing. 

\begin{definition}\label{def:PD}
Given a link diagram on an oriented surface $S$, we generate the set of quadruples of the \textbf{PD-code} representing this diagram by the following procedure. For each crossing we include the quadruple of arc labels involved beginning with the incoming under-edge and proceeding around the crossings in the positively oriented direction of $S$ (see Figures~\ref{fig:PD} and \ref{fig:PDLink}). We give a positive sign to incoming edges and a negative sign to outgoing edges.

\begin{figure}
\begin{center}
\begin{overpic}[height=4cm]{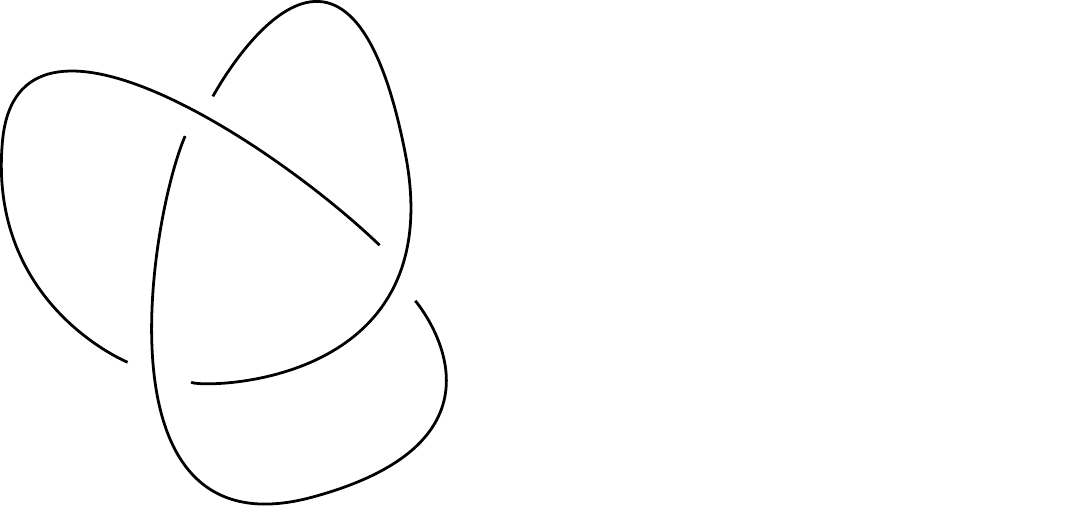}
\put(40,35){$1$}
\put(28,15){$2$}
\put(-3,30){$3$}
\put(28,33){$4$}
\put(35,-1){$5$}
\put(16,21){$6$}
\put(55,27){$\{ [+4,-2,-5,+1],[+2,-6,-3,+5],$}
\put(67,21){$[+6,-4,-1,+3]\}$}
\end{overpic}
\end{center}
\caption{\label{fig:PD} A diagram for $3_1$ and its PD-code. The labels are only single integers here as there is only one component. Note that we may omit directional arrows as the orientation can be inferred from the ordering of the edge labels.}
\end{figure}

\begin{figure}
\begin{center}
\begin{overpic}[height=7cm]{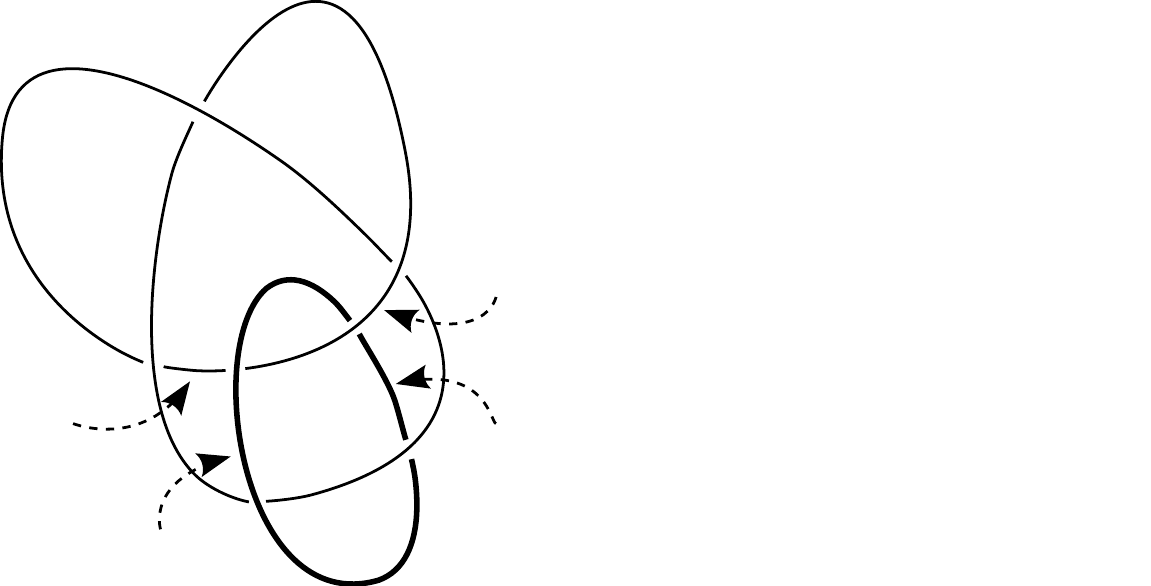}
\put(1,46){$(1,1)$}
\put(25,37){$(1,2)$}
\put(39,19){$(1,3)$}
\put(26,5){$(1,4)$}
\put(8.5,10){$(1,5)$}
\put(7,31){$(1,6)$}
\put(35,43){$(1,7)$}
\put(41,26){$(1,8)$}
\put(24,16){$(1,9)$}
\put(2,15){$(1,10)$}
\put(22,28){$(2,1)$}
\put(41,11){$(2,2)$}
\put(37,2){$(2,3)$}
\put(11,2){$(2,4)$}
\put(53,43){$\{[(1,+6),(1,-2),(1,-7),(1,+1)],$}
\put(55,38){$[(1,+2),(1,-8),(1,-3),(1,+7)],$}
\put(55,33){$[(2,+1),(1,-9),(2,-2),(1,+8)],$}
\put(55,28){$[(1,+9),(2,-1),(1,-10),(2,+4)],$}
\put(55,23){$[(1,+10),(1,-6),(1,-1),(1,+5)],$}
\put(55,18){$[(2,+2),(1,-4),(2,-3),(1,+3)],$}
\put(55,13){$[(1,+4),(2,-4),(1,-5),(2,+3)]\}$}
\end{overpic}
\end{center}
\caption{\label{fig:PDLink} A diagram of the link $7_7^2$ and the corresponding PD-code.}
\end{figure}


\end{definition}

In truth, the signs appearing in a PD-code are extraneous. They can be recovered by traversing the edges of each component in their ordering remembering that the first label in a quadruple is the incoming under crossing. However, keeping track of these signs will be convenient for our algorithm for constructing a link diagram on a surface. 

Given a link diagram we can generate a PD-code, but which collections of quadruples come from link diagrams? We will now define a particular class of codes and in the next section show that a link diagram on an orientable surface can be recovered from such codes.

\begin{definition}{\label{def:PDSet}}
Let $\overline{\PD}$ be the set of collections of quadruples of the labels 
$$\{(1,1),\ldots,(1,n_1),\ldots,(\mu,1),\ldots,(\mu,n_\mu)\}$$
satisfying the following properties.

\begin{enumerate}
\item Each edge label appears exactly twice, once positively (i.e., the sign on the second coordinate is positive) and once negatively (i.e., the sign on the second coordinate is negative). 
\item Each quadruple contains two positive edges and two negative edges and begins with a positive label.
\item The second coordinate of non-adjacent edge labels in each quadruple are consecutive modulo the number of arcs in those edges component. In addition, the first coordinate of non-adjacent edge labels in each quadruple are the same.
\item The first and third edge labels have opposite signs and the lesser edge label (in the ordering of the labels) is always positive. The second and fourth edge labels have opposite signs and the lesser edge label (in the ordering of the labels) is always positive.
\end{enumerate}

\end{definition}

\section[Graphs and Surfaces]{Graphs and Surfaces}\label{sect:Graphs}

In order to reconstruct a link from a PD-code we will first recover the underlying $4$-regular graph. This graph can be thought of as the shadow of a link diagram. Since it is the edges of the link diagram which are labeled it will be convenient to describe graphs by giving the edge set explicitly and describing the vertex set by edge adjacencies. Thus, we will use the following alternate definition of a graph.

\begin{definition}{\label{def:graph}}
A \textbf{graph} is an ordered pair $(E,V)$ where $E$ is a set of edges labeled by the pairs \\ $\{(1,1),\ldots,(1,n_1),\ldots,(\mu,1),\ldots,(\mu,n_\mu)\}$ and $V$ is a multi-set (a set with duplicates allowed, this is to allow loops at a single vertex) of unordered lists of labels such that each edge label appears exactly twice throughout the collection of lists. The unordered lists will be referred to as vertices and we say that an edge is incident to a vertex if it appears in the corresponding list. We say that two graphs $(E,V)$ and $(E',V')$ are isomorphic if there is a bijection $\phi:E \rightarrow E'$ such that $\{e_1,...,e_k\} \in V$ if and only if $\{\phi(e_1),...,\phi(e_k)\} \in V'$. Because we wish to recover a link diagram which has oriented edges, we will also consider graphs with oriented edges. When doing so, we will keep track of the orientations at the edge ends by labeling the incoming end with a positive label and the outgoing edge end with a negative label (this choice of sign convention is arbitrary, but should conform to the choice of signs of the labels in the PD-codes).
\end{definition}

\begin{example}{\label{ex:graph}}
Consider the graph with edge set $E=\{(1,1),(1,2),(1,3),(1,4),(1,5),(1,6)\}$ and vertex set \\ $V=\{ [4,2,5,1],[2,6,3,5],[6,4,1,3]\}$. Since only one label appears in the first coordinate of the edge labels we will, as mentioned in the previous section, drop the first coordinate and write the edge set as $\{1,2,3,4,5,6\}$. Graphs of this type will correspond to knots ($\mu=1$). Note that the graph with edge set $E'=\{1,2,3,4,5,6\}$ and vertex set $V'=\{ [2,4,1,5],[5,6,3,2],[1,4,6,3]\}$ is the same graph as $(E,V)$. 

We can now orient the edges of this graph by adding signs to the edge labels in the vertex set. For example, the oriented graph with edge set $E=\{1,2,3,4,5,6\}$ and vertex set \\$V=\ [+4,-2,-5,+1],[+2,-6,-3,+5],[+6,-4,-1,+3]\}$ gives the graph underlying the knot diagram in Figure \ref{fig:PD}.
\end{example}

%
%
%
%
%

It is also important to note that these are abstract graphs as opposed to particular embeddings. A diagram of an embedding of the graph from Example \ref{ex:graph} will be shown below. In order to specify a particular embedding of graph we must include information about the order in which the edges (in terms of the orientation of $S$) appear at each vertex.

\begin{definition}\label{def:cyclicorder}
Given a connected $4$-regular graph we define a \textbf{cyclic ordering at the vertices} to be an ordering of the four edge ends incident at vertex $v_i$. We will denote the ordering at vertex $v_i$ by $(e_i^0,e_i^1,e_i^2,e_i^3)$. Note that a single edge may be incident to the same vertex twice and thus the ordering labels need not correspond to distinct edges. 

\end{definition}

The reader may suspect that an arbitrary ordering of the edges may not produce a link diagram (and the reader would be correct), but as we will show later, graphs that are produced from PD-codes have an ordering on the edges that does produce a link diagram. Since each edge appears exactly twice (once positively and once negatively) in the cyclic orderings we can make the following definition.

\begin{definition}
We define the successor map by $s(e_i^j)=-e_i^{j+1 \mod(4)}$.
\end{definition}

The successor map gives the next edge to appear if we were to walk along an edge and turn right at each vertex. There is no particular reason for taking the convention of turning right and the theory would work just as well with the opposite choice. As another matter of convention, walking along a positive edge means we are going with the edge orientation and walking along a negative edge means we are going against the edge orientation. With this idea in mind we will generate a surface of identification using the successor map.

\begin{definition}\label{def:surface}
For each orbit of the successor map $s$ we construct an oriented polygon in the plane with sides given by the labels in each orbit where the labels are assigned clockwise around the polygon. The orientation of an edge agrees with the orientation inherited from the standard orientation of the plane for \emph{negative} labels and disagrees with this orientation for \emph{positive} labels (cf. Example \ref{ex:graphCell}). The orientations of the $2$-cells is inherited from the standard orientation on the plane. The orientations of the edges is induced by the orientation of the $2$-cells. This construction gives the \textbf{associated cell complex} to a given $4$-regular graph with a cyclic ordering at the vertices.
\end{definition}

\begin{example}{\label{ex:graphCell}}
Consider the graph with edge set $\{1,2,3,4,5,6\}$ and vertex set \\ $\{ [4,2,5,1],[2,6,3,5],[6,4,1,3]\}$ from Example \ref{ex:graph} . We can order the edges at each vertex and assign signs to distinguish incoming and outgoing edges as shown below.
$$\{ [+4,-2,-5,+1],[+2,-6,-3,+5],[+6,-4,-1,+3]\}$$
Note that this is precisely the PD-code of the example in Figure \ref{fig:PD} . The values of successor map are shown in Table \ref{fig:sOrbits}. Computing the orbits of this map on the set of edge labels gives that the set of faces is as follows. 

$$\{(+1,-4),(-1,-3,-5),(-2,+5),(+2,+6,+4),(-6,+3)\}$$

From Definition \ref{def:surface} we have two triangles and three bigons as shown in Figure \ref{fig:graphCells}.


\begin{figure}
\begin{center}
\begin{overpic}[height=8cm]{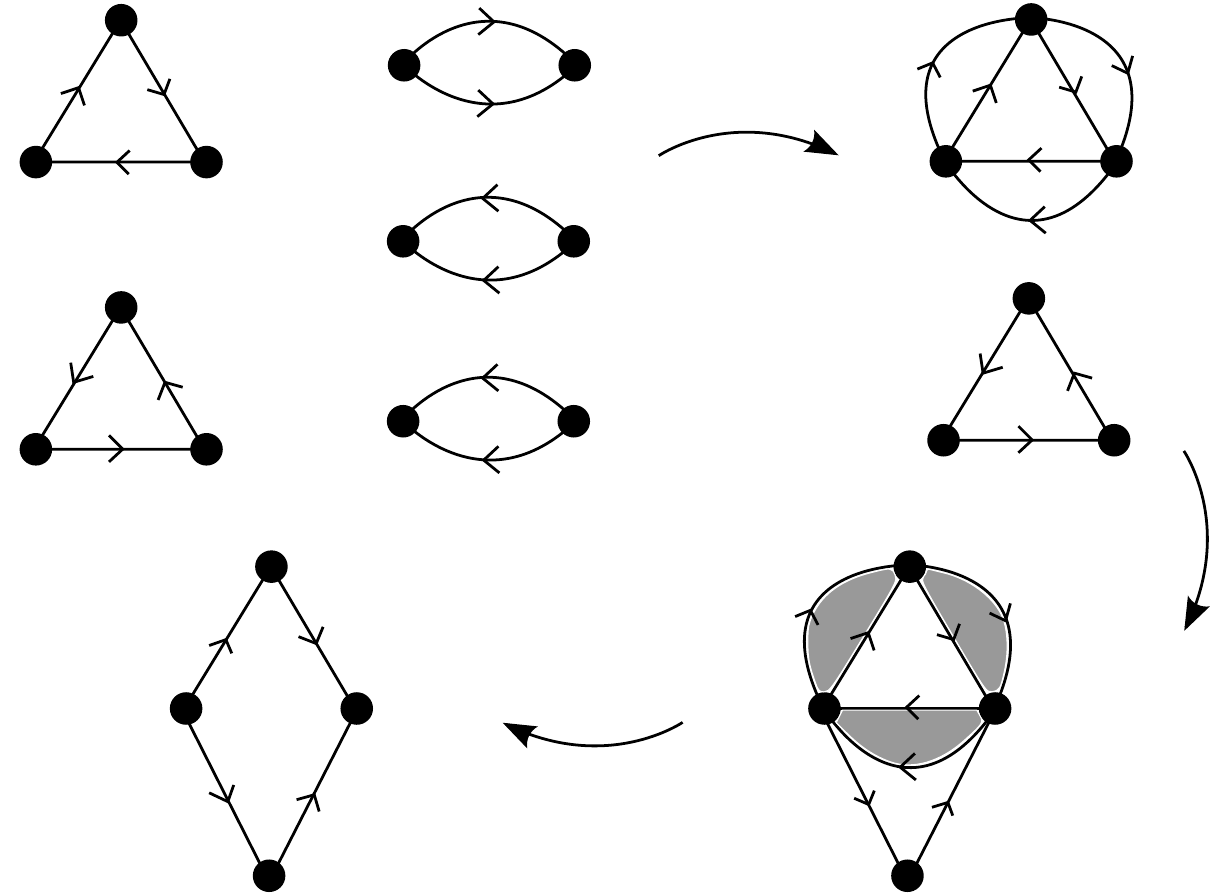}
\put(4,66){$2$}
\put(14,66){$6$}
\put(9,56){$4$}
\put(39,73){$3$}
\put(37,61){$6$}
\put(41,58){$4$}
\put(41,47){$1$}
\put(37,43){$2$}
\put(39,32){$5$}
\put(86,63){$6$}
\put(83,65){$2$}
\put(83,61){$4$}
\put(93,69){$3$}
\put(86,52){$1$}
\put(76,70){$5$}
\put(88,43){$3$}
\put(78,43){$1$}
\put(83,33){$5$}
\put(83,24){$3$}
\put(65,24){$5$}
\put(73,16){$4$}
\put(72,20){$2$}
\put(76,18){$6$}
\put(75,7){$1$}
\put(79,6){$3$}
\put(68,6){$5$}
\put(27,6){$3$}
\put(15,6){$5$}
\put(16,21){$5$}
\put(27,21){$3$}
\put(4,42){$1$}
\put(14,42){$3$}
\put(9,32.5){$5$}
\end{overpic}
\end{center}
\caption{\label{fig:graphCells} The faces and resulting polyhedron from Example~\ref{ex:graphCell}.} The shaded disks are contracted and interior edges removed in the last step to show that this is an identification for a $2$-sphere.
\end{figure}

\begin{table}[ht]
\begin{center}
\begin{tabular}{ccc}
\toprule
	
$s(+4)=+2$; & $s(+2)=+6$; & $s(+6)=+4$ \\	
$s(-2)=+5$; & $s(-6)=+3$; & $s(-4)=+1$ \\
$s(-5)=-1$; & $s(-3)=-5$; & $s(-1)=-3$ \\
$s(+1)=-4$; & $s(+5)=-2$; & $s(+3)=-6$ \\	

\bottomrule
\end{tabular}
\end{center}
\caption{\label{fig:sOrbits}The successor map from Example~\ref{ex:graphCell}}
\end{table}

\end{example}


\begin{proposition}\label{prop:GraphToSurface}
The cell complex associated to a connected $4$-regular graph with a cyclic ordering at the vertices is homeomorphic to a closed, connected, orientable surface.
\end{proposition}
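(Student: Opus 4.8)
The plan is to show directly that the associated cell complex $X$ — whose $0$-cells are the vertices of the graph, whose $1$-cells are the edges, and which has one $2$-cell for each orbit of the successor map $s$, glued along the $1$-skeleton by reading off the sequence of signed edge labels in the orbit as in \defn{surface} — is a finite, connected CW-complex that is locally Euclidean of dimension $2$, and then to pin down orientability from the sign conventions in \defn{surface}. Compactness is automatic (finitely many cells), and connectedness follows from connectedness of the underlying graph, since the $1$-skeleton of $X$ is exactly that graph. So the substance is the local structure.

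First I would record the counting fact that drives everything. The successor map $s$ is a permutation of the set of $2\lvert E\rvert$ \emph{signed} edge labels (each edge $e$ contributes the two labels $+e,-e$, one at each of its ends, or both at a single vertex when $e$ is a loop). Hence its orbits partition this set, and across all the polygons of \defn{surface} each signed label occurs exactly once as a polygon side. Consequently every edge of the graph is glued to exactly two polygon sides (its two oriented occurrences). Local Euclidean structure at interior points of edges follows at once: a neighborhood of such a point is the union of two half-disks glued along a diameter, i.e.\ a disk; and interior points of $2$-cells are trivially interior to a disk.

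The main step — and the part I expect to require the most care — is the link of a vertex $v$. Take a small disk $D$ about $v$; the four edge-ends incident to $v$ meet $\partial D$ in four points $p_0,\dots,p_3$ which, by \defn{cyclicorder}, occur around $\partial D$ in the rotation order $e_i^0,e_i^1,e_i^2,e_i^3$, cutting $D$ into four sectors $S_0,\dots,S_3$ with $S_j$ lying between $e_i^j$ and $e_i^{j+1\bmod 4}$. Because $s(e_i^j)=-e_i^{j+1\bmod 4}$ steps by exactly one in the cyclic order, the boundary word of the $2$-cell containing the corner at $S_j$ has the sides $e_i^j$ and $-e_i^{j+1}$ consecutive, so that $2$-cell genuinely fills the sector $S_j$ near $v$; and since each signed label lies in a unique orbit, each of the four corners is filled exactly once. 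Therefore the link of $v$ — the graph on $\partial D$ with vertices $p_0,\dots,p_3$ and edges the four boundary arcs $a_j$ of the sectors, where $a_j$ joins $p_j$ to $p_{j+1}$ — is the $4$-cycle $p_0$--$p_1$--$p_2$--$p_3$--$p_0$, a single circle. (This is exactly the role of stepping by one in the definition of $s$: a different successor rule could split this link into two disjoint circles, destroying the manifold property at $v$.) The argument is purely local and is unaffected by loops or repeated incidences, since it sees only the cyclic list of edge-ends at $v$. Combining the three cases, $X$ is a closed $2$-manifold.

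Finally, for orientability I would orient each $2$-cell by the standard orientation of the plane, as in \defn{surface}, and check that these orientations are mutually compatible across every edge. By the counting fact, an edge $e$ appears once as the side $+e$ of its polygon and once as the side $-e$ of its polygon; by the sign convention of \defn{surface} the induced boundary orientation on $e$ disagrees with the plane orientation in the first case and agrees with it in the second. Hence the two $2$-cells meeting along $e$ induce opposite orientations on $e$, which is precisely the compatibility condition needed for the chosen orientations to assemble into a global orientation of $X$. Therefore $X$ is a closed, connected, orientable surface, as claimed.
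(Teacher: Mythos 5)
Your proof is correct, and for the manifold part it follows essentially the same route as the paper: interior points of $2$-cells are trivial, edge-interior points get two half-disks glued along an arc (using that each signed label lies in exactly one orbit of $s$, so each edge meets exactly two polygon sides), and the link of each vertex is a single circle because $s$ steps by exactly one position in the cyclic ordering, so each of the four sectors at a vertex is filled by exactly one polygon corner. Your explicit link-of-a-vertex computation is a slightly more careful rendering of the paper's observation that adjacent edges in each quadruple bound a distinct triangle of the corresponding face (Figure~\ref{fig:triangles}). Where you genuinely diverge is orientability: the paper cancels paired edges to reduce the identification to a single polygon and then computes $H_2 \cong \Z$ from the fact that the surviving boundary word still pairs each edge once positively and once negatively, whereas you verify local compatibility directly --- the sign convention of Definition~\ref{def:surface} forces the two sides glued along an edge $e$ (one labelled $+e$, one labelled $-e$) to induce opposite orientations on $e$, which is exactly the condition for the plane orientations of the $2$-cells to assemble into a global orientation. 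Your version is the more elementary of the two and sidesteps the reduction-to-one-polygon step, which the paper does not carry out in detail and which needs some care when there are several polygons; the paper's version has the advantage of directly exhibiting a generator of $H_2$, i.e.\ a fundamental class. There is no gap in your argument.
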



\begin{proof}
Let $X$ be the cell complex corresponding to these identifications. To see that $X$ is homeomorphic to a closed surface we will first show that the gluing given in Definition~\ref{def:surface} produces a piecewise linear surface. There is nothing to check at interior points of the faces. Let $x$ be a point on edge $e$ and let $I$ be an open interval in $e$. There are exactly $2$ faces, say $f_1$ and $f_2$, incident to edge $e$ because the orbits partition the edges and each edge appears exactly twice. Each of these faces is a polygon in the plane so we may take $U_{f_1}$ and $U_{f_2}$ to be open balls in $f_1$ and $f_2$ which intersect the boundaries at exactly $I$. In $X$ we may now construct the set $U_{f_1} \cup I \cup U_{f_2}$ which is homeomorphic to an open disk in the plane. It is left to check that there is a neighborhood of each vertex which is homeomorphic to a ball in the plane. By construction each vertex is locally incident to exactly $4$ faces. In addition, near each vertex there are four distinct edges where adjacent edges in the quadruple bound a distinct triangle in the corresponding face (see Figure~\ref{fig:triangles}). Therefore, two edges either correspond to a unique triangle or they are not the boundary of any triangle. Thus, each vertex has a neighborhood which is homeomorphic to a piecewise linear disk. So, $X$ is homeomorphic to a closed piecewise linear surface. Since we have a finite cell complex we can appeal to the classification of surfaces and assume that we can smooth our surface. We will call this smooth surface the surface associated to the diagram. The original graph is embedded on the associated surface as a $1$-skeleton. Thus, since the graph is connected the surface must also be connected. 

To see that the surface is orientable we refer to the identification of polygons in the plane and compute the second integral cellular homology group. If we glue pairs of edges until we are left with a single polygon then we can remove edges and vertices until there is only a single $2$-cell remaining. Since we removed pairs of edges, the remaining edges must still appear in pairs once positively and once negatively. There is only a single $2$-cell, so the group of $2$-chains is generated by a single element. The boundary of this two-cell is the sum of the edges which is zero as the remaining edges pair off positively and negatively. This process is illustrated in Figure \ref{fig:graphCells}. So, the second integral cellular homology group is isomorphic to $\Z$ and the surface is therefore orientable.

\begin{figure}
\begin{center}
\begin{overpic}[height=5cm]{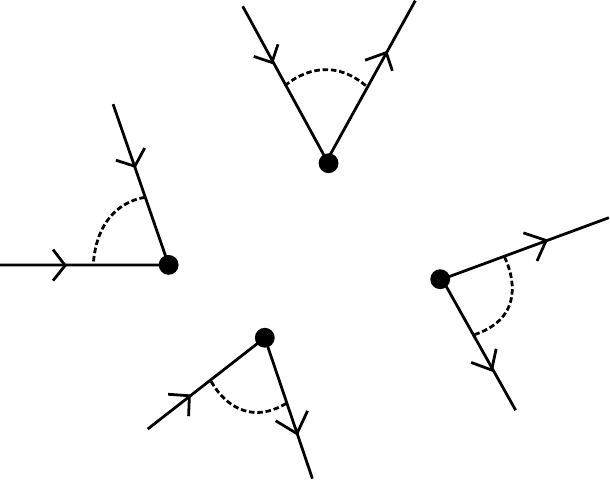}
\put(43,55){$1$}
\put(59,55){$6$}
\put(76,37){$6$}
\put(71,20){$2$}
\put(48,14){$2$}
\put(31,19){$5$}
\put(17,28){$5$}
\put(27,42){$1$}
\end{overpic}
\end{center}
\caption{\label{fig:triangles} Four faces coming together at a vertex to form a disk.}
\end{figure}

\end{proof}

If one starts with a disconnected graph, then the resulting surface is the disjoint union of the connected surfaces resulting from each connected component of the graph.


\begin{proposition}\label{prop:PDtoD}
Given a PD-code we can produce a well-defined link diagram on an orientable surface $S$.
\end{proposition}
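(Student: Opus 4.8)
\section*{Proof proposal for Proposition~\ref{prop:PDtoD}}

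The plan is to peel the diagram off the code one layer at a time, with Proposition~\ref{prop:GraphToSurface} doing the topological heavy lifting.

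First I would recover the underlying oriented $4$-regular graph. Declare the edge set to be the set of labels occurring in the code and, for each quadruple, declare the multiset of its four entries (forgetting their internal order but retaining their signs) to be a vertex in the sense of Definition~\ref{def:graph}. Property~(1) of Definition~\ref{def:PDSet} --- every label occurs exactly twice, once positively and once negatively --- guarantees that this is a legitimate oriented graph, and since each quadruple supplies four edge-ends and each label is assigned exactly two of them, the graph is $4$-regular (loops and multi-edges permitted). Now reinstate the internal order of each quadruple as a cyclic ordering at the corresponding vertex in the sense of Definition~\ref{def:cyclicorder}. Splitting into connected components and applying Proposition~\ref{prop:GraphToSurface} to each one, together with the observation following its proof that handles the disconnected case, produces a closed orientable surface $S$ --- carrying the orientation coming from the standard orientation of the polygons in Definition~\ref{def:surface} --- into which the graph embeds as its $1$-skeleton.

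Second I would reconstruct the immersed circles and the edge labeling. Fix a component index $i$. Edge $(i,1)$ is outgoing from a unique vertex and incoming to a unique vertex; property~(3), which says that non-adjacent entries of a quadruple share their first coordinate and have consecutive second coordinates, identifies the through-strand continuing past that incoming vertex as edge $(i,2)$, while properties~(2) and~(4) --- forcing the sign pattern of each quadruple to be $[+,-,-,+]$ or $[+,+,-,-]$, so that each of its two non-adjacent pairs consists of one incoming end and one outgoing end --- guarantee that this continuation respects the orientation. Iterating, the edges $(i,1),\dots,(i,n_i)$ splice together into a single coherently oriented closed curve running along the $1$-skeleton with transverse double points exactly at the vertices; performing this for each $i$ and ordering the components by their first coordinate yields a labeled oriented immersion $\sqcup_n S^1 \to S$ of precisely the type in Definition~\ref{def:diagram}. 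It remains to install the crossing signs: at each vertex the distinguished first entry of its quadruple is the incoming under-edge (this is exactly the normalization achieved by the way Definition~\ref{def:PDSet} requires the entries of a quadruple to be written), so the orientations of the two strands together with the orientation of $S$ determine the over/under pair and hence a color in $\{1,-1\}$ as in Figure~\ref{fig:pushoff}.

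Finally, for well-definedness I would observe that no arbitrary choice entered the construction once the code was fixed: Proposition~\ref{prop:GraphToSurface} delivers $S$ uniquely up to orientation-preserving homeomorphism (it is the closed orientable surface whose genus is read off from the cell complex, and a compact $2$-manifold carries a smoothing unique up to isotopy), and the embedded $1$-skeleton, the spliced component circles, the edge labels, and the crossing signs are then each completely determined by the combinatorics above. Hence the recipe descends to a well-defined assignment of an element of $\D$ to each code in $\overline{\PD}$. The main obstacle is the bookkeeping in the second paragraph rather than any deep point: one has to verify that the orientation conventions threaded through Definitions~\ref{def:PD}, \ref{def:PDSet}, and~\ref{def:surface} and through the successor map are mutually compatible, so that following the edge labels genuinely closes up into coherently oriented circles and pins down an unambiguous over/under choice at every crossing, instead of producing a sign clash. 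Once that is checked the topology is free, since Proposition~\ref{prop:GraphToSurface} has already been established.
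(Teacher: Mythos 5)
Your proposal is correct and follows essentially the same route as the paper: build the $4$-regular graph with cyclic vertex orderings from the quadruples, invoke Proposition~\ref{prop:GraphToSurface} to get the orientable surface, trace the components by passing to the non-adjacent (through-strand) edge at each vertex using properties (3) and (4) of Definition~\ref{def:PDSet} to orient the resulting cycles, and read the crossing sign off the sign pattern of each quadruple. The only difference is that you spell out the bookkeeping (which property of Definition~\ref{def:PDSet} justifies which step) in somewhat more detail than the paper does.
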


\begin{proof}
Given a PD code $C$ we first produce a graph with a cyclic ordering at the vertices. Let the vertex set be the set of quadruples and the edge set be the set $\{(1,1),\ldots,(1,n_1),\ldots, (\mu,1),\ldots,(\mu,n_\mu)\}$ of labels in $C$ (neglecting signs). An edge is incident to a vertex if its label appears in the vertices' quadruple. We get an ordering at the vertices by the order of the labels in each quadruple. Thus, by Proposition~\ref{prop:GraphToSurface} we can produce an orientable, not necessarily connected, surface $S$ with a cell decomposition corresponding to $C$. We will now use the remaining data in $C$ to define a link diagram on $S$ using the $1$ skeleton of the cell decomposition. Beginning with edge $(1,1)$ we traverse the $1$ skeleton by choosing the non-adjacent edge according to the ordering at each vertex. Property $4$ of Definition~\ref{def:PDSet} implies that the resulting cycle can be given an orientation by orienting each edge from the negative label to the positive. Thus, we have an oriented immersion from $S^1$ into $S$. We repeat this process until every edge is contained in a cycle thus producing an oriented immersion $d:\sqcup_n S^1 \rightarrow S$ where the $\sqcup_n S^1$ are ordered. We color a vertex with $1$ if the corresponding quadruple in the PD-code is of the form $[+\alpha, -\gamma, -\beta, +\delta]$ and we color a vertex with $-1$ if the corresponding quadruple in the PD-code is of the form $[+\alpha, +\gamma, -\beta, -\delta]$. 
\end{proof}

It is important to note that the map of Proposition~\ref{prop:PDtoD} and the algorithm for computing a PD-code from a diagram are not inverses. For example, if one placed the standard diagram for the trefoil inside an embedded disk on the torus one would obtain the same PD-code as the standard diagram for the trefoil on a sphere. In this case, the map of Proposition~\ref{prop:PDtoD} will produce the diagram of the trefoil on the sphere (cf. Example~\ref{ex:trefoil}).

\begin{definition}\label{def:PDSurface}
The \textbf{surface associated to a PD-code} $C$ is the closed, orientable surface associated to the $4$-regular graph with an ordering at the vertices underlying $C$ (cf. Proposition~\ref{prop:PDtoD}). We say the PD-code is \textbf{connected} if the associated surface is connected. 
\end{definition}

Proposition~\ref{prop:PDtoD} and Proposition~\ref{prop:GraphToSurface} imply that the surface associated to a PD-code is well-defined.

\begin{definition}
Let the \textbf{Euler characteristic of a PD-code} be the Euler characteristic of the corresponding polyhedron. We define the \textbf{genus of a PD-code} to be $g=\sum_i g_i$ where the sum is over the connected components of the associated surface.
\end{definition}

\begin{theorem}\label{theorem:euler}
The Euler characteristic of an $n$ crossing PD-code $C$ is given by $N(s)-n$ where $N(s)$ is the number of orbits of $s$ on the set of edge labels.
\end{theorem}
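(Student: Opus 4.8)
The plan is to read off the Euler characteristic directly from the cell structure of the polyhedron associated to $C$, by counting the cells in each dimension and invoking $\chi = V - E + F$. This is legitimate because Proposition~\ref{prop:PDtoD} together with Proposition~\ref{prop:GraphToSurface} already tells us that this polyhedron is a genuine finite cell complex homeomorphic to a closed (hence triangulable) surface, so the alternating count of cells does compute $\chi$.

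First I would identify the three kinds of cells coming out of the construction in Proposition~\ref{prop:PDtoD}. The $0$-cells are the vertices of the underlying $4$-regular graph, which are exactly the quadruples of $C$; since $C$ has $n$ crossings there are $n$ of them, so $V=n$. The $1$-cells are the edges of the graph, i.e.\ the edge labels occurring in $C$ with signs neglected. There are $n$ quadruples, each with four entries, giving $4n$ entries counted with multiplicity, and property~(1) of Definition~\ref{def:PDSet} says each label occurs exactly twice; hence $E = 4n/2 = 2n$. The $2$-cells are the polygons built in Definition~\ref{def:surface}, one per orbit of the successor map $s$ on the set of (signed) edge labels, so $F = N(s)$.

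Assembling these, $\chi(C) = V - E + F = n - 2n + N(s) = N(s) - n$, as claimed. In the disconnected case the same bookkeeping applies component by component: $\chi$ is additive over disjoint unions, and $n$, $E$, and $N(s)$ all split as sums over the connected components of the associated surface, so the formula holds verbatim without any change.

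There is essentially no hard step here; the only points requiring care are already dispatched by the earlier results. One must know that the object really is a regular cell complex on a closed surface so that the alternating cell count equals $\chi$ (Proposition~\ref{prop:GraphToSurface}), and that the polygon-per-orbit prescription of Definition~\ref{def:surface} does not silently merge or drop $2$-cells (this is guaranteed by the local analysis in the proof of Proposition~\ref{prop:GraphToSurface}, where each pair of adjacent edges at a vertex bounds a distinct triangular sector). Once those are in hand the computation is immediate. As a sanity check, the trefoil PD-code of Example~\ref{ex:graphCell} has $n=3$ and $N(s)=5$ (two triangles and three bigons), giving $\chi = 5-3 = 2$, consistent with its associated surface being the $2$-sphere.
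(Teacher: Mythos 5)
Your proposal is correct and takes exactly the same route as the paper: count $n$ vertices, $2n$ edges, and $N(s)$ faces of the associated polyhedron and compute $\chi = n - 2n + N(s)$. The paper's proof is a two-line version of this; your additional justifications (why $E=2n$, why the cell count computes $\chi$, additivity over components) are all sound elaborations of the same argument.
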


\begin{proof}
The polyhedron corresponding to $C$ has $n$ vertices, $2n$ edges, and $N(s)$ faces. Computing the Euler characteristic we see that $n-2n+N(s)=N(s)-n$.
\end{proof}

\begin{example}
\label{ex:trefoil}
Consider again the PD-code $\{ [+4,-2,-5,+1],[+2,-6,-3,+5],[+6,-4,-1,+3]\}$ from Figure~\ref{fig:PD}. We saw in Example \ref{ex:graphCell} that the orbits of the successor map are as follows.

$$\{(+1,-4),(-1,-3,-5),(-2,+5),(+2,+6,+4),(-6,+3)\}$$ 

\vspace{0.5cm}

Note that there are three vertices and five $2$-cells. So, by Theorem \ref{theorem:euler} we have $\chi = 5-3 = 2$. Thus, the surface associated with this PD-code is a single $2$-sphere. It is important to note that we have implicitly used the fact that the surface is connected and orientable to conclude that having an Euler characteristic of $2$ implies that the surface is a $2$-sphere.

%
%



\end{example}

As previously mentioned, if we start with a link diagram, generate its PD-code and then produce a link diagram from the PD-code we will not necessarily end up with the diagram that we started with. For example, two link diagrams on a torus that differ by a Dehn twist would have the same PD-code. However, for the case of classical non-split link diagrams (i.e. non-split diagrams on a $2$-sphere) these operations are in fact inverses. We now make this precise.

\begin{definition}\label{def:PDSET}
Define $\PD$ to be the subset of the set of all PD-codes $\overline{\PD}$ (cf. Definition~\ref{def:PDSet}) whose associated surfaces are single $2$-spheres.
\end{definition}

\begin{proposition}\label{prop:PDisD}
$\PD$ is in bijection with the set of non-split link diagrams on $2$-spheres (cf. Definition~\ref{def:diagram}).
\end{proposition}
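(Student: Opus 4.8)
The plan is to construct explicit maps in both directions and show they are mutually inverse. Given a PD-code $C \in \PD$, Proposition~\ref{prop:PDtoD} already produces a link diagram $d_C$ on the associated surface, which by hypothesis is a single $2$-sphere. The first thing I would check is that $d_C$ is \emph{non-split}: the faces of the cell decomposition are the orbits of the successor map $s$, and splitness of the diagram would require a separating circle in $S^2 \setminus \imL$ with diagram on both sides; I would argue that the complement of the $1$-skeleton is a disjoint union of open disks (the $2$-cells), so any such circle must live inside a single face and hence cannot separate diagram from diagram, unless the graph itself is disconnected — but connectedness of the associated sphere forces connectedness of the underlying $4$-regular graph (as noted after Proposition~\ref{prop:GraphToSurface}). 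In the reverse direction, given a non-split link diagram $d$ on an oriented $S^2$, the construction of Definition~\ref{def:PD} produces a PD-code $C_d$; I would verify $C_d$ satisfies properties (1)--(4) of Definition~\ref{def:PDSet} directly from the local picture at each crossing in Figure~\ref{fig:pushoff}, and verify that the associated surface of $C_d$ is again an $S^2$ by using Theorem~\ref{theorem:euler}: the number of faces $N(s)$ equals the number of complementary regions of $d$ in $S^2$, and since $d$ is a connected $4$-regular graph on $S^2$, Euler's formula for the original embedding gives $\chi = 2$, forcing the associated surface to be a sphere.

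Next I would show $C \mapsto d_C \mapsto C_{d_C}$ is the identity on $\PD$. This is essentially bookkeeping: the edge labels of $C_{d_C}$ are exactly the edges of the $1$-skeleton, which are the labels of $C$; the quadruple at each crossing is read off by starting at the incoming under-edge and going positively around $S^2$; since the coloring of vertices in Proposition~\ref{prop:PDtoD} is defined precisely so that the sign-pattern $[+\alpha,-\gamma,-\beta,+\delta]$ versus $[+\alpha,+\gamma,-\beta,-\delta]$ matches the crossing sign, and the cyclic order at each vertex in $d_C$ is inherited from the order of labels in the quadruple of $C$, reading the PD-code back off recovers $C$ verbatim. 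I would also need that the diagram's orientation of $S^2$ used to read off $C_{d_C}$ agrees with the orientation Proposition~\ref{prop:GraphToSurface} puts on the associated surface — this is where the sign conventions in Definition~\ref{def:surface} (negative labels agree with the plane's orientation, positive disagree) must be tracked carefully.

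For the other composition $d \mapsto C_d \mapsto d_{C_d}$, I would argue that it returns a diagram equivalent to $d$ (i.e. equal in $\D$). The key point is rigidity of embeddings of connected graphs on $S^2$: a connected $4$-regular graph together with a cyclic order at each vertex has, up to orientation-preserving homeomorphism of the sphere, a \emph{unique} cellular embedding on $S^2$ — this is the standard fact that a rotation system determines the embedding, and that for genus-zero rotation systems the embedding surface is $S^2$. Since $d$ and $d_{C_d}$ have the same underlying graph, the same rotation system (the cyclic order at vertices is exactly what $C_d$ records), the same edge-labeling, the same orientation of edges (recovered from the positive/negative convention), and the same crossing signs, they are isotopic on $S^2$ respecting all the decorations, hence equal in $\D$. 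Here the non-split hypothesis is what guarantees the embedding is \emph{cellular} (all complementary regions are disks), which is what makes the rotation system determine the embedding; a split diagram could have a non-disk region and the argument would break.

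The main obstacle I expect is the uniqueness-of-embedding step in the last paragraph: making precise that the rotation system plus connectedness plus genus zero pins down the embedding up to homeomorphism, and that the homeomorphism can be taken to respect the ordering of the components $\sqcup_n S^1$ and the edge labels. One clean way to handle it is to observe that the associated cell complex of Proposition~\ref{prop:GraphToSurface}, built from $C_d$, is by construction a cell decomposition of $S^2$ with the given graph as $1$-skeleton, and then invoke uniqueness of the cellular structure — equivalently, note that both $d$ and $d_{C_d}$ exhibit the \emph{same} abstract $2$-complex (same vertices, edges, and $2$-cells attached along the same boundary words), so the identity map on the $1$-skeleton extends to a homeomorphism of the realizations. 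The bijection on equivalence classes then follows since all the decorating data is combinatorial and preserved by this homeomorphism.
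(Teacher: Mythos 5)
Your proposal is correct and follows essentially the same route as the paper: both directions are checked to be mutually inverse, with the key step being that a connected $4$-regular graph with a fixed cyclic order at the vertices embeds in $S^2$ uniquely up to orientation-preserving homeomorphism (the paper cites this as a corollary of the Jordan--Schoenflies theorem, your ``rotation system determines the embedding'' fact), combined with the fact that orientation-preserving homeomorphisms of $S^2$ are isotopic to the identity. Your additional well-definedness checks (non-splitness of the reconstructed diagram, membership of the extracted code in $\PD$ via the Euler characteristic) are details the paper leaves implicit, but they do not change the argument.
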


\begin{proof}
Definition~\ref{def:PD} gives a map from link diagrams to PD-codes and Proposition~\ref{prop:PDtoD} gives a map from PD-codes to link diagrams. We must show that these maps are inverses when restricted to non-split links diagrams on the $2$-sphere and the set of PD-codes $\PD$ (cf. Definition~\ref{def:PDSET}). 

First, consider the image $C$ of a link diagram $D$ under the map of Definition~\ref{def:PD}. The link diagram $D$ on the $2$-sphere gives a natural cell decomposition. The $0$-cells are the self-intersections, the $1$-cells are the arcs, and the $2$-cells are polygons which form the complement of the diagram. It is clear that the map of Proposition~\ref{prop:PDtoD} recovers a cell decomposition with the same number of $0$-cells and $1$-cells and by the definition of the successor map the $2$-cells are polygons in bijection with the decomposition given by the link diagram. Moreover, the gluing map given in Proposition~\ref{prop:PDtoD} ensures that the $2$-cell adjacency relations in the link diagram as well as the associated surface agree. Therefore, the $1$-skeleton of $D$ and the $1$-skeleton of the diagram reconstructed from $C$ are isomorphic graphs. It is a corollary of the Jordan-Schonflies theorem that two embeddings of a graph with the same cyclic order at the vertices on a surface are related by an orientation preserving homeomorphism of the surface (cf. \cite{GraphSurface} Theorem $3.2.4$). \comment{The cyclic orderings of these two graphs are the same, thus this homeomorphism is orientation preserving and therefore isotopic to the identity on $S^2$.} Any orientation preserving homeomorphism of $S^2$ is isotopic to the identity. Thus, the $1$-skeletons of $D$ and $C$ are isotopic and hence equivalent as link diagrams.

Now consider the image $D$ of a PD-code $C$ under the map given in Proposition~\ref{prop:PDtoD}. The arc labels of $D$ are given by the labels of $C$ as is the over/under crossing information. We again have that the adjacency relations of the $2$-cells are preserved. Therefore if we read the PD-code as in Definition~\ref{def:PD} from $D$ we recover $C$.
\end{proof}

\begin{example}
We now finish our running example with the PD-code of Example \ref{ex:trefoil} and show that it in fact recovers the knot diagram from Figure \ref{fig:PD} . In Figure \ref{fig:graph_diagram} we see the graph from Example \ref{ex:trefoil} along with the edge labels and vertex coloring as specified in Proposition \ref{prop:PDtoD} . To see that we have recovered the link diagram that we started with we can simply replace the vertices with the correct crossing configuration as shown in Figure \ref{fig:pushoff} and check that the resulting diagram agrees with Figure \ref{fig:PD}.

\begin{figure}
\begin{center}
\begin{overpic}[width=11cm]{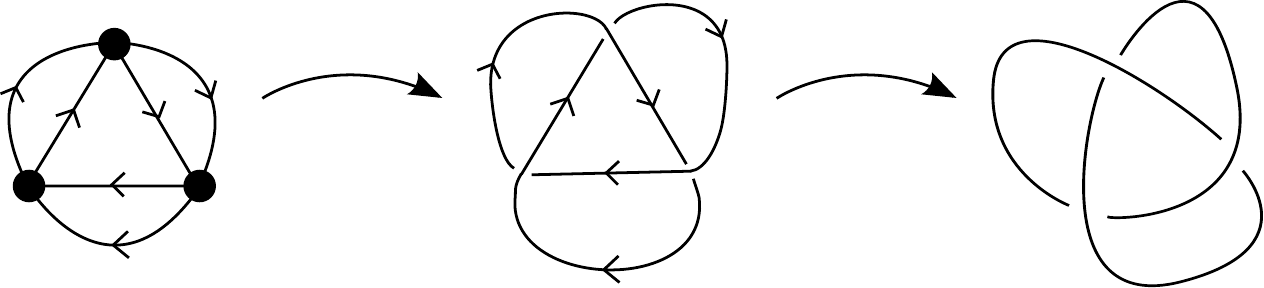}
\put(2,19){$5$}
\put(14,19){$3$}
\put(8,-1){$1$}
\put(7,13){$2$}
\put(11,10){$6$}
\put(6,9){$4$}

\put(41,22){$5$}
\put(55.5,22){$3$}
\put(47,-2.5){$1$}
\put(46,14){$2$}
\put(50,11){$6$}
\put(45,10){$4$}

\put(77,18){$3$}
\put(87,11){$6$}
\put(98,17){$1$}
\put(92,11){$4$}
\put(90,7){$2$}
\put(97,-1){$5$}
\end{overpic}
\end{center}
\caption{\label{fig:graph_diagram} Here we see the graph from Example \ref{ex:trefoil} as it is embedded on $S^2$. If we redraw the crossings as shown in Figure \ref{fig:pushoff} we see that the diagram of Figure \ref{fig:PD} is recovered. The second step shown is simply a rotation in the plane of the page.} 
\end{figure}

\end{example}

\begin{example}\label{ex:torus_example}
We end this section with an example of a PD-code which represents a link diagram on a torus. Consider the PD-code $$\{[+5,+2,-6,-3],[+3,-1,-4,+6],[+1,+4,-2,-5]\} $$ obtained from the link diagram shown in Figure \ref{fig:torus_diagram} by choosing an incoming under crossing and sign at each vertex. The orbits of the successor map are $$\{(+1,-4,-6,+3), (+2,+6,-3,-5,-1,+4),(+5,-2)\} .$$ At this point we can compute the Euler characteristic of the surface using Theorem \ref{theorem:euler}. There are $3$ orbits of the successor map and $3$ crossings, so we have $\chi = 3 - 3 =0$. Thus, the surface is a torus. The surface of identification is given in Figure \ref{fig:torus_example}. The diagram in this example is that of a virtual knot with $3$ classical crossings and $2$ virtual crossings.

\begin{figure}
\begin{center}
\begin{overpic}[width=7cm]{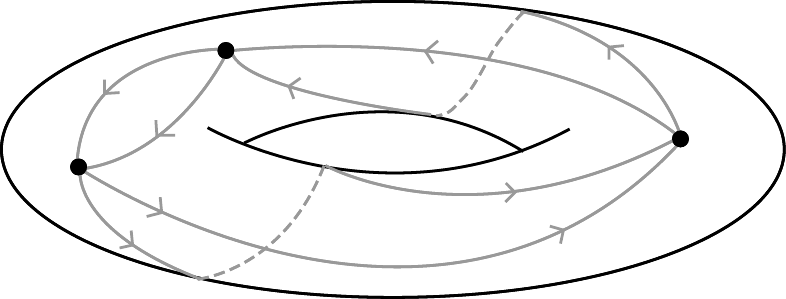}
\put(8,25){$2$}
\put(20,24){$5$}
\put(50,9){$6$}
\put(80,9){$3$}
\put(84,27){$1$}
\put(50,33){$4$}
\end{overpic}
\end{center}
\caption{\label{fig:torus_diagram} A knot diagram on a torus which produces the PD-code in Example \ref{ex:torus_example} . The vertex colorings have been omitted for clarity of the figure, but the sign of each crossing could be deduced from the PD-code.}
\end{figure}

\begin{figure}
\begin{center}
\begin{overpic}[width=9cm]{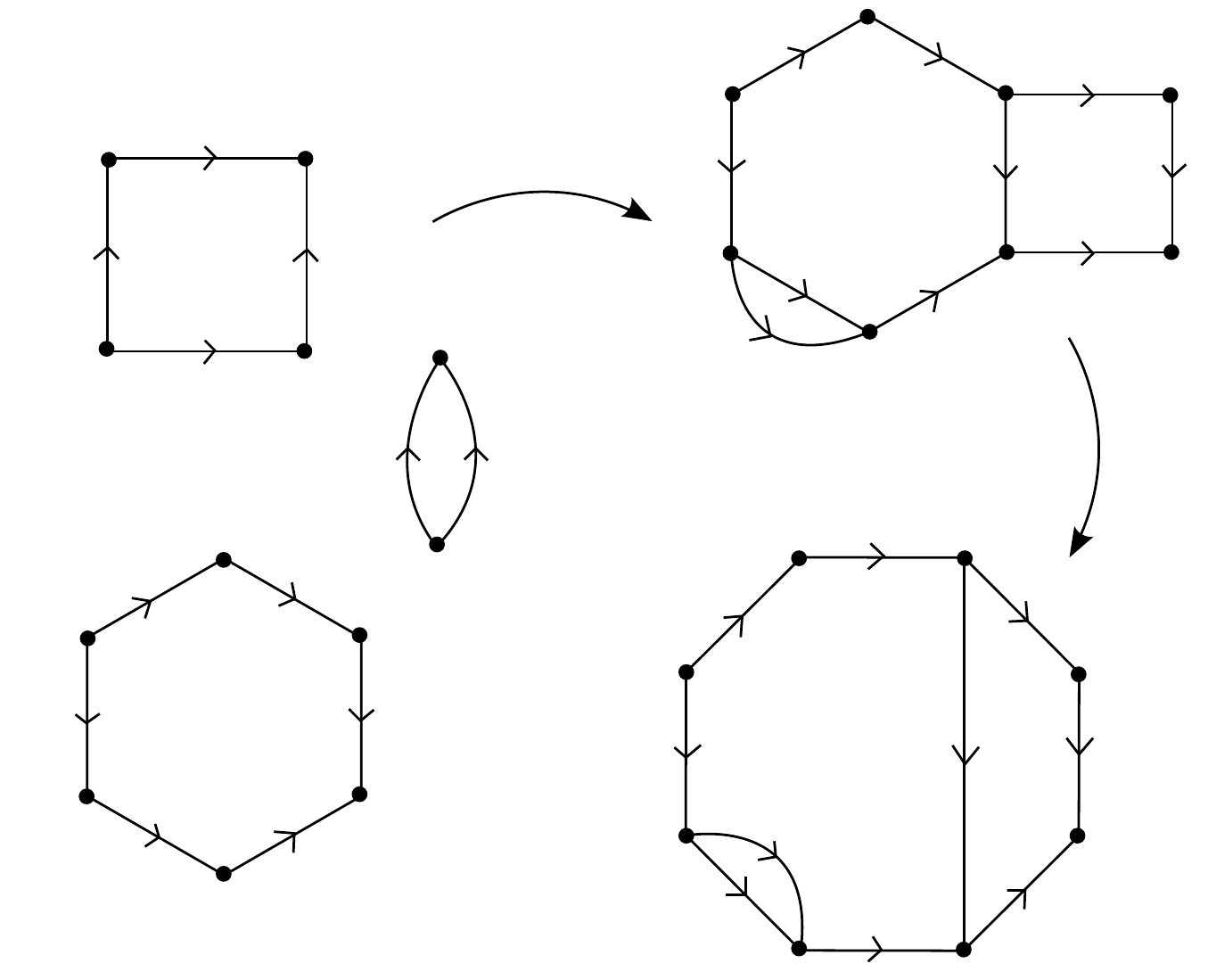}
\put(8,33){$+4$}
\put(22,33){$+2$}
\put(0,20){$-1$}
\put(31.5,20){$+6$}
\put(7,7){$-5$}
\put(23,7){$-3$}

\put(26.5,42){$+5$}
\put(41,42){$-2$}

\put(14,47){$-6$}
\put(14,70){$+1$}
\put(1,59){$+3$}
\put(27,59){$-4$}

\put(53,66){$-1$}
\put(60,78){$+4$}
\put(77,78){$+2$}
\put(86,75){$+3$}
\put(86,56){$-4$}
\put(99,66){$+1$}
\put(79,66){$6$}
\put(77,53){$-3$}
\put(67,58){$5$}
\put(60,49){$-2$}

\put(70,37){$+2$}
\put(86,30){$+3$}
\put(91,17){$+1$}
\put(86,5){$-4$}
\put(70,-2){$-3$}
\put(63,12){$5$}
\put(56,3){$-2$}
\put(49.5,17){$-1$}
\put(56,32){$+4$}
\put(76,18){$6$}
\end{overpic}
\end{center}
\caption{\label{fig:torus_example} The surface of identification obtained by the PD-code in Example \ref{ex:torus_example} . We know this surface is orientable as removing edges $5$ and $6$ from the third frame leaves a single polygon whose boundary is $0$. By computing $\chi = 0$ we conclude that the surface is indeed a torus as claimed.}
\end{figure}
\end{example}

\section[Reidemeister Moves]{Reidemeister Moves}\label{sect:RMoves}

The goal of this section is to establish that $\PD$ modulo a certain set of combinatorial moves is equivalent to the set of link diagrams modulo the Reidemeister Moves, and in turn, to the isotopy classes of links by Reidemeister's Theorem. 

Polyak \cite{ORMoves} established that the four oriented Reidemeister Moves shown in Figure \ref{fig:ORMoves} are sufficient to generate oriented link equivalence on diagrams. 

\begin{figure}
	\begin{overpic}[width=16cm]{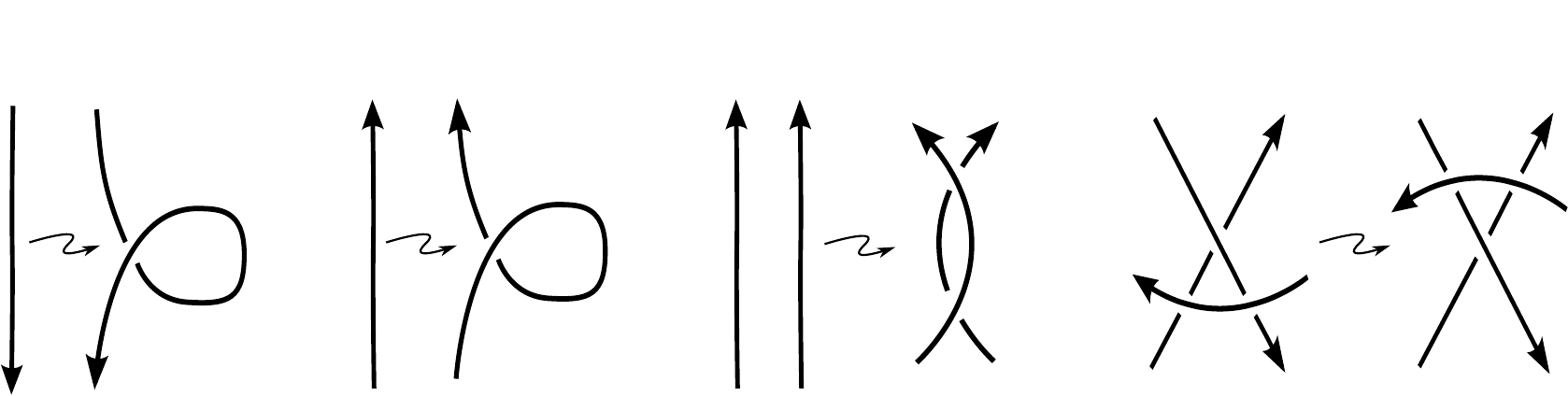}
	\put(1.5,15){$i$}

	\put(7.5,15){$i$}
	\put(12,13){$\alpha$}
	\put(7.5,2){$\beta$}

	\put(25,2){$i$}

	\put(31,15){$\beta$}
	\put(35,13.2){$\alpha$}
	\put(30.2,2){$i$}

	\put(44.5,2){$i$}
	\put(52.5,2){$j$}

	\put(63,9){$\alpha$}
	\put(58.1,9){$\gamma$}
	\put(57,2){$i$}
	\put(64.3,2){$j$}
	\put(64,15){$\delta$}
	\put(56.5,15){$\beta$}

	\put(77,3){$i$}
	\put(74.8,8){$j$}
	\put(79.2,8){$k$}
	
	\put(94,15){$i$}
	\put(91.5,10.5){$k$}
	\put(96.4,10.5){$j$}

	\put(2,22){$1(a)$}
	\put(25,22){$1(a)$}
	\put(54.5,22){$2$}
	\put(86,22){$3$}

%

	\end{overpic}
	\caption{\label{fig:ORMoves} A generating set of oriented Reidemeister Moves. The analogous moves for PD-codes are given in Definition \ref{def:PDMoves}.}
\end{figure}

\begin{definition}{\label{def:PDMoves}}
	Given a PD-code $C$ the following moves are called the PD-Moves. Each of these moves corresponds to an oriented Reidemeister Moves shown in Figure \ref{fig:ORMoves} .
	\begin{enumerate}
		\item \phantom{~}
		\begin{enumerate}
			\item Insert or remove a quadruple of the form $[+i, -\beta, -\alpha, +\alpha]$ into $P$ where $\alpha$ and $\beta$ are new labels with $i<\alpha<\beta<i+1$ and $i$ is any label of $P$.
			
			\item Insert or remove a quadruple of the form $[+\alpha, -\alpha, -\beta, +i]$ into $P$ where $\alpha$ and $\beta$ are new labels with $i<\alpha<\beta<i+1$ and $i$ is any label of $P$.

		\end{enumerate}
		
		\item \phantom{~}
		\begin{enumerate}
			\item Let $i$ and $j$ be labels of $P$ both bounding the same $2$-cell. In addition, one of them agrees with the orientation of the $2$-cell and the other does not. Insert the two quadruples $[+j, -\alpha, -\gamma. +i]$ and $[+\gamma , +\alpha, -\delta, -\beta]$ into $P$ where $\alpha$, $\beta$, $\gamma$, and $\delta$ are new labels with $i<\alpha<\beta<i+1$ and $j<\gamma <\delta<j+1$. Undoing this move amounts to removing a pair of quadruples of the form$\{[+j, -\alpha, -\gamma, +i],[+\gamma , +\alpha, -\delta, -\beta]\}$ and make the following identification of the labels. $$i=\alpha=\beta \text{ and } j=\gamma=\delta$$ 
		\end{enumerate}
		\item If $(i, j, k)$ is an orbit of the successor map, then a triple of quadruples of the form 
			$$\{[+(j-1), +i, -j, -(i+1)],[+j, -k, -(j+1), +(k-1)],[+k, -i, -(k+1),+(i-1)]\}$$
			may be replaced by the triple of quadruples
			$$\{[+(j-1), -(k+1), -j, +k],[+(k-1), -(i+1), -k, +i],[+j, +(i-1), -(j+1), -i]\}$$
			or vice versa.
%
	\end{enumerate}
	
\end{definition}

It is a straight forward exercise to verify that the moves described in Definition \ref{def:PDMoves} correspond with the Reidemeister Moves in Figure \ref{fig:ORMoves} . This, the following lemma is immediate and formalizes the relationship between the PD-Moves and the Reidemeister Moves.

\begin{lemma}{\label{lemma:PDMoves}}
	If $C$ and $C'$ are PD-codes that are related by the single PD-Move $p$, then $L(C)$ and $L(C')$ are related by the Reidemeister move $L(p)$. Similarly, if two link diagrams are related by the single Reidemeister Move $r$, then their PD-codes are related by $PD(r)$.
\end{lemma}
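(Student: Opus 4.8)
The plan is to prove both halves of the lemma by one local analysis, carried out case by case over the four moves of Definition~\ref{def:PDMoves}. The essential observation is that a Reidemeister move alters a diagram only inside an embedded disk, and the corresponding PD-move alters a code only in a bounded number of quadruples, every other quadruple being unchanged apart from the shift of labels forced by subdividing or merging an arc. Hence it suffices to check, for each of the moves $1(a)$ (in its two chirality variants), $2$, and $3$, that (i) reading the PD-code of the local tangle of Polyak's move (Figure~\ref{fig:ORMoves}) by the rule of Definition~\ref{def:PD} produces exactly the quadruple fragment written in Definition~\ref{def:PDMoves}, and conversely (ii) feeding that fragment into the reconstruction of Proposition~\ref{prop:PDtoD} returns the same local tangle. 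Since $L(\cdot)$ and $PD(\cdot)$ are built locally, establishing (i)--(ii) on each tangle gives both implications of the lemma at once; in the diagram-to-code direction one chooses labelings of the two diagrams that agree outside the disk, so that ``related by $PD(r)$'' is read up to the relabeling equivalence of Definition~\ref{def:PDSet}.

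First I would settle the edge-label bookkeeping. A move of type $1$ or $2$ subdivides one or two arcs of the affected component: if an arc labeled $i$ is subdivided, its new sub-arcs receive labels strictly between $i$ and $i+1$ and all subsequent labels shift up, which is exactly what is needed for Property~$3$ of Definition~\ref{def:PDSet} (consecutivity modulo the number of arcs in the component) to survive the move. This is precisely the convention ``$i<\alpha<\beta<i+1$'' (and ``$j<\gamma<\delta<j+1$'') appearing in Definition~\ref{def:PDMoves}, so the insert/remove operations there coincide with passing Definition~\ref{def:PD} over the two sides of the Reidemeister picture.

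Then, move by move. For $1(a)$ the kink contains a single crossing, and Definition~\ref{def:PD} (start at the incoming under-arc, proceed positively around the crossing, sign incoming arcs $+$ and outgoing arcs $-$) yields $[+i,-\beta,-\alpha,+\alpha]$ or $[+\alpha,-\alpha,-\beta,+i]$ according to which strand of the loop lies on top, matching the two cases. For move $2$, the hypothesis that the pre-move arcs $i$ and $j$ bound a common $2$-cell, one coherently and one incoherently oriented with respect to that cell, is exactly the condition that the two strands cobound a region through which the R2 finger-move can be pushed; after the move the two new crossings lie on the resulting bigon, Definition~\ref{def:PD} records them as $[+j,-\alpha,-\gamma,+i]$ and $[+\gamma,+\alpha,-\delta,-\beta]$, and the identification $i=\alpha=\beta$, $j=\gamma=\delta$ describes the effect on arc labels of collapsing that bigon. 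For move $3$, the three crossings of an R3 tangle bound a common triangular $2$-cell; by Definition~\ref{def:surface} the faces of the associated cell complex are the orbits of the successor map $s$, so a triangular face is precisely an orbit $(i,j,k)$ of $s$ -- this is the hypothesis in Definition~\ref{def:PDMoves}$(3)$ -- and computing the PD-codes of the three crossings before and after sliding the third strand across gives the two displayed triples. The converse statements are the same computations read in reverse, using that Proposition~\ref{prop:PDtoD} recovers the $1$-skeleton, the $2$-cell adjacencies, and the crossing signs from the code.

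The obstacle is organizational rather than conceptual: one must apply the orientation conventions uniformly across all the local pictures -- which orientation of the surface counts as ``positive,'' which sub-arc of a subdivided edge receives which label, and how the $\{1,-1\}$ coloring of a crossing is encoded by the sign pattern of its quadruple -- and one must check that the algebraic reformulations genuinely capture the geometry: ``consecutive modulo the number of arcs'' for a subdivided edge, ``a common $2$-cell traversed with opposite orientations'' for an R2 bigon, and ``an orbit of $s$'' for an R3 triangle. Once these dictionaries are pinned down each case is a finite verification, which is why the lemma is immediate from Definition~\ref{def:PDMoves} together with Definition~\ref{def:PD} and Proposition~\ref{prop:PDtoD}.
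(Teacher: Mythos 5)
Your proposal is correct and follows exactly the route the paper intends: the paper offers no written proof, stating only that verifying the correspondence between the PD-Moves of Definition~\ref{def:PDMoves} and the oriented Reidemeister Moves of Figure~\ref{fig:ORMoves} is a straightforward exercise from which the lemma is immediate. Your case-by-case local analysis of moves $1(a)$, $2$, and $3$, together with the label-shifting bookkeeping, is precisely that exercise carried out in detail.
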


We will overload the maps $L: \PD \rightarrow \D$ and $PD: \D \rightarrow \PD$ so that if $p$ is a PD-Move, then $L(p)$ is the corresponding Reidemeister Move. Similarly, if $r$ is a Reidemeister Move, then $PD(r)$ is the corresponding PD-Move. The equivalence relation generated by the PD-Moves will be called $P$ and the equivalence relation generated by the Reidemeister Moves will be called $R$.

The following proposition gives a convenient way to describe the relationship between link diagrams modulo the Reidemeister Moves and PD-codes module the PD-Moves.

\begin{proposition}
	The following diagram commutes.

\begin{center}
\tikzset{node distance=2cm, auto}
\begin{tikzpicture}
  \node (PD) {$\PD$};
  \node (L) [right of=PD] {$\D$};
  \node (pd) [below of=PD] {$\PD / P$};
  \node (l) [right of=pd] {$L / R$};
  \draw[transform canvas={yshift=0.3ex},->] (PD) to node {$L$} (L);
\draw[transform canvas={yshift=-0.3ex},<-] (PD) to node [swap]{$PD$} (L);
  \draw[transform canvas={yshift=-0.3ex},->] (l) to node {$\pi(PD)$} (pd);
  \draw[transform canvas={yshift=0.3ex},<-] (l) to node [swap]{$\pi(L)$} (pd);
  \draw[->] (PD) to node [swap] {$\pi_P$} (pd);
  \draw[->] (L) to node {$\pi_R$} (l);
\end{tikzpicture}
\end{center}

Where the map $\pi(L)$ is defined by choosing a representative $C$ from an equivalence class and setting $\pi(L)(\bar{C}) = \pi_R(L(C))$. The map $\pi(PD)$ is similarly defined by $\pi(PD)(\bar{D}) = \pi_P(PD(D))$.

\end{proposition}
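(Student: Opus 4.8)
The plan is to verify directly that each of the four small squares/triangles making up the diagram commutes, reducing everything to \lem{PDMoves} together with the already-established maps $L$ and $PD$. Concretely, the diagram asserts: (i) $\pi_R \circ L = \pi(L) \circ \pi_P$ and $\pi_P \circ PD = \pi(PD) \circ \pi_R$ (the two "slanted" commutativities tying the top row to the bottom row); (ii) $\pi(L)$ and $\pi(PD)$ are well-defined, i.e. independent of the choice of class representative; and (iii) the top horizontal pair $(L, PD)$ and the bottom horizontal pair $(\pi(L), \pi(PD))$ are mutually consistent with the vertical projections. Most of this is bookkeeping once well-definedness is in hand.

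First I would address well-definedness of $\pi(L)$, which is the only step with real content. Suppose $\bar C = \bar C'$ in $\PD/P$, so $C$ and $C'$ are connected by a finite sequence of PD-Moves $p_1, \dots, p_k$. By \lem{PDMoves}, applying $L$ turns this into a sequence of Reidemeister moves $L(p_1), \dots, L(p_k)$ connecting $L(C)$ and $L(C')$; hence $L(C)$ and $L(C')$ lie in the same $R$-class, so $\pi_R(L(C)) = \pi_R(L(C'))$. This shows $\pi(L)(\bar C)$ is independent of the representative, so $\pi(L) \co L/R \to \PD/P$ — wait, I should be careful with the direction: as drawn, $\pi(L)$ goes from $\PD/P$ to $L/R$ is \emph{not} what the tikzpicture says; rather $\pi(L) \co l \to pd$, i.e. $L/R \to \PD/P$ reading the arrow, but the label placement and the defining formula $\pi(L)(\bar C) = \pi_R(L(C))$ indicate $\pi(L)$ sends a $\PD/P$-class to an $L/R$-class. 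I would simply follow the paper's defining formulas: $\pi(L)$ takes $\bar C \in \PD/P$ to $\pi_R(L(C)) \in L/R$, and $\pi(PD)$ takes $\bar D \in L/R$ to $\pi_P(PD(D)) \in \PD/P$. The well-definedness of $\pi(PD)$ is symmetric, using the second half of \lem{PDMoves}: if $D$ and $D'$ are related by Reidemeister moves, their PD-codes are related by the corresponding PD-Moves, so $\pi_P(PD(D)) = \pi_P(PD(D'))$.

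Next I would check commutativity of the slanted squares. For the left one: for $C \in \PD$, $\pi(L)(\pi_P(C)) = \pi(L)(\bar C) = \pi_R(L(C))$ by definition, which is exactly $(\pi_R \circ L)(C)$; so $\pi(L) \circ \pi_P = \pi_R \circ L$. The right slanted square is identical with the roles of $L$ and $PD$ swapped: $\pi(PD)(\pi_R(D)) = \pi_P(PD(D)) = (\pi_P \circ PD)(D)$. Finally, the bottom horizontal pair is compatible with the top in the following sense: since $\pi_P$ and $\pi_R$ are surjective, commutativity of the two triangles on each side, combined with the relations $L \circ PD$ and $PD \circ L$ on the top (which need not be identities — the paper is careful that these maps are not inverses on all of $\D$, only on the non-split $S^2$ part, cf. \prop{PDisD}), forces the diagram as a whole to commute; there is nothing further to verify beyond what has already been shown, because every arrow into or out of the bottom row factors through $\pi_P$ or $\pi_R$.

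The main obstacle is conceptual rather than computational: getting the directions of the maps and the meaning of "commutes" exactly right for a diagram that mixes genuine maps ($L$, $PD$, $\pi_P$, $\pi_R$) with maps defined only up to a representative choice ($\pi(L)$, $\pi(PD)$), and where the horizontal pairs are adjoint-looking arrows that are not literal inverses. Once the well-definedness argument (a one-line application of \lem{PDMoves}) is recorded, the rest is unwinding definitions. I do not expect to need any property of PD-codes beyond \lem{PDMoves} and the bare existence of the maps $L$ and $PD$ from \defn{PD} and \prop{PDtoD}.
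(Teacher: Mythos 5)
Your proposal is correct and follows essentially the same route as the paper: the only substantive step is well-definedness of $\pi(L)$ and $\pi(PD)$, which you establish exactly as the paper does by inducting on Lemma~\ref{lemma:PDMoves} along a sequence of PD-Moves (resp. Reidemeister Moves), after which commutativity is immediate from the defining formulas $\pi(L)(\bar{C}) = \pi_R(L(C))$ and $\pi(PD)(\bar{D}) = \pi_P(PD(D))$. Your extra unwinding of the slanted squares is just a more explicit version of the paper's remark that the diagram "commutes by construction."
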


\begin{proof}
	The proof will essentially amount to using repeated applications of Lemma \ref{lemma:PDMoves} . To verify that $\pi(L)$ is well defined we first let $C$ and $C'$ be to representatives of the same equivalence class in $\PD / P$. As such, there is a sequence of PD-Moves $\{p_1, \ldots, p_n\}$ from $C$ to $C'$. Inducting on lemma \ref{lemma:PDMoves} gives that $\{L(p_1), \ldots, L(p_n)\}$ is a sequence of Reidemeister Moves from $L(C)$ to $L(C')$. Thus, $L(C)$ and $L(C')$ are in the same equivalence class of $\D / R$. The argument is essentially identical to show that $\pi(PD)$ is well-defined. 

	\comment{We now turn to showing commutativity. Let $C \in \PD$, it must be shown that $\pi(L)(\pi_P(C)) = \pi_R(L(C))$. } The diagram commutes by construction since the bijection between $\PD$ and $\D$ is used to to define the maps $\pi(L)$ and $\pi(PD)$.
\end{proof}

The main content of this proposition is that the set PD-codes under the PD-Moves is equivalent to link diagrams under the Reidemeister moves as claimed.

\section{\comment{The Action of $\Gamma_\mu$ on PD-codes}  Intrinsic Symmetry of Spherical PD-codes}\label{sect:PDAct}

For this section we will consider only genus $0$ PD-codes, though the action described here after is well-defined for higher genus PD-codes. We can only produce a well-defined link in $S^3$ for genus $0$ PD-codes (without extra information about the embedding of the surface), and hence there is no natural correspondence between the action on the PD-codes and an action on an underlying link. 

The intrinsic symmetry group of a link was first defined by Whitten \cite{Whit} and discussed in detail by Cantarella, et al. \cite{VIGRE}. These symmetries are the generalization of invertiblility and chirality and are described by the group given in the following definition. 

\begin{definition} 
Consider the homomorphism given by
\begin{equation*}
\omega:S_\mu\longmapsto~\operatorname{Aut}(\mathbf{Z}_2^{\mu+1}),\hspace{20pt}p\longmapsto\omega(p)
\end{equation*}
where $\omega(p)$ is defined as
\begin{equation*}
\omega(p)(\epsilon_0,\epsilon_1,\epsilon_2...\epsilon_\mu)=(\epsilon_0,\epsilon_{p(1)},\epsilon_{p(2)}...\epsilon_{p(\mu)}).
\end{equation*}
\\For $\gamma=(\epsilon_0,\epsilon_1,...\epsilon_\mu, p),$ and $\gamma'=(\epsilon'_0,\epsilon'_1,...\epsilon'_\mu, q)\in \mathbf{Z}_2^{\mu+1}\rtimes_\omega S_\mu$, we define the \emph{Whitten group} $\Gamma_\mu$ as the semidirect product $\Gamma_\mu=\mathbf{Z}_2^{\mu+1}\rtimes_\omega S_\mu$
with the group operation
\begin{align*}
\gamma\ast\gamma'&=\left(\epsilon_0,\epsilon_1,\epsilon_2...\epsilon_\mu,p\right)\ast (\epsilon'_0,\epsilon'_1,\epsilon'_2...\epsilon'_\mu,q)\\
& =((\epsilon_0,\epsilon_1,\epsilon_2...\epsilon_\mu)\cdot\omega(p)(\epsilon'_0,\epsilon'_1,\epsilon'_2...\epsilon'_\mu),qp)\\
&=(\epsilon_0\epsilon'_0,\epsilon_1\epsilon'_{p(1)},\epsilon_2\epsilon'_{p(2)}...\epsilon_\mu\epsilon'_{p(\mu)},qp)
\end{align*}
We may also use the notation $\Gamma(L)$ to refer to the \textbf{Whitten group} $\Gamma_\mu$.
\label{def:whittengroup}
\end{definition}

The action of this group on link diagrams is also described by Cantarella, et al. \cite{VIGRE}. Our present goal is to describe the corresponding action on PD-codes. The main result of this section is that one can choose a preferred PD-code for each link type so that no non-trivial PD-code is fixed by a non-identity element of the Whitten group, i.e, the action is free.

\begin{definition}\label{def:PDAct}
Consider an element $\gamma = (\epsilon_0,\epsilon_1,\ldots,\epsilon_\mu,p) \in \Gamma_\mu$. We define an action of $\gamma$ on the labels of a PD-code  by the following operations (which must be applied in order):

\begin{enumerate}
\item First, we apply $p$ to the first component of each label.
\item If $\epsilon_0 = 1$ we do nothing. If $\epsilon_0 = -1$ we shift each positive crossing to the right by $1$ and each negative crossing to the left by $1$.
\item If $\epsilon_i = 1$ we do nothing. If $\epsilon_i = -1$ we first apply the permutation 

$$\left( 1 \right) \left( 2 \hspace{0.5cm} n_1 \right) \left( 3 \hspace{0.5cm} n_1-1 \right) \ldots \left( \frac{n+2}{2}-1 \hspace{0.5cm} \frac{n+2}{2}+1 \right) \left( \frac{n_1+2}{2} \right)$$

 to the second component of the labels whose first component label is $i$, ignoring signs, then shift each quadruple to the right (or left) by $2$ if the $i$th component is the under crossing, and finally switch the sign on the second component of every label with first component $i$.

\end{enumerate}
\end{definition}

%
%

It is clear that the action of these elements produces a set of quadruples, but we will now show that in fact there is a well-defined action on PD-codes.

\begin{proposition}\label{prop:PDAct}
The previous definition gives a well-defined action of $\Gamma_\mu$ on PD-codes. 
\end{proposition}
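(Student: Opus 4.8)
The plan is to verify that the three operations in Definition~\ref{def:PDAct} each send a PD-code to a PD-code (i.e. preserve the defining properties (1)--(4) of Definition~\ref{def:PDSet}), and that together they implement a genuine group action, i.e. that $(\gamma \ast \gamma')$ acts as $\gamma$ followed by $\gamma'$ and that the identity of $\Gamma_\mu$ acts trivially. Since $\PD$ is the subset of $\overline{\PD}$ whose associated surface is a $2$-sphere (Definition~\ref{def:PDSET}), and the surface is determined by the underlying $4$-regular graph with cyclic ordering via Theorem~\ref{theorem:euler}, I would also check that each operation does not change the number of orbits of the successor map $s$ relative to the crossing number, so that the sphere condition is preserved; operation (1) is just a relabeling of components, operation (2) shifts arc labels cyclically within components, and operation (3) reverses an orientation, none of which alters the combinatorial type of the associated cell complex (they permute or relabel cells), so $\chi$ is unchanged.

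The key steps, in order: (i) Treat operation~(1) first. Applying $p$ to first components permutes the component blocks $\{(i,1),\dots,(i,n_i)\}$; since $\Gamma_\mu$'s permutation part only permutes components of equal size within a link type under the chosen preferred code, the label set is preserved set-wise, each label still appears once positively and once negatively (property~1), adjacency and the consecutivity condition (property~3) are unaffected since they are stated per component, and properties (2) and (4) are about signs and relative order within a quadruple, which a component-permutation respects. (ii) Operation~(2), the global orientation reversal $\epsilon_0 = -1$: here I would verify that ``shift each positive crossing right by $1$, each negative crossing left by $1$'' correctly turns the quadruple read starting from the incoming under-edge in the $+$ orientation of $S$ into the quadruple read in the reversed orientation; the point is that reversing orientation of $S$ reverses the cyclic order at every crossing and swaps incoming/outgoing roles in a controlled way, and the prescribed cyclic shift by $\pm 1$ is exactly the bookkeeping that restores the normal form demanded by property~(4) (lesser label of an opposite-sign pair is positive) and property~(2) (quadruple begins with a positive label). (iii) Operation~(3) for a single component reversal $\epsilon_i = -1$: the palindromic permutation on the second components is the standard "reverse the cyclic order of arcs around component $i$'' map (fixing the basepoint arc $1$ and the antipodal arc), the shift by $2$ on quadruples whose $i$-th strand is the under-strand re-normalizes which arc is ``incoming under'', and the final sign flip on all second components with first component $i$ records that every arc of component $i$ has reversed orientation. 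I would check properties (1)--(4) survive each sub-step and, crucially, that the composite is an involution on the relevant labels, matching $\epsilon_i$ having order $2$.

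Finally, (iv) to get an \emph{action} rather than just a set of self-maps, I would show the operations for distinct indices commute (operation~(3) for index $i$ touches only labels with first component $i$, and operation~(2) commutes with all of them up to the stated ordering), that each generator squares to the identity on PD-codes, and that operation~(1) conjugates operation~(3)$_i$ to operation~(3)$_{p(i)}$ --- this is precisely the $\omega(p)$ twist in the semidirect product $\Gamma_\mu = \mathbf{Z}_2^{\mu+1} \rtimes_\omega S_\mu$, so the group law $\gamma \ast \gamma'$ of Definition~\ref{def:whittengroup} is respected. The main obstacle I expect is step~(ii)/(iii): pinning down that the ad hoc cyclic shifts (``by $1$'', ``by $2$'') are exactly the right normalization constants so that the output again satisfies property~(4) of Definition~\ref{def:PDSet} on the nose, rather than merely up to a re-reading of each quadruple. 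This is a finite case check --- comparing the quadruple forms $[+\alpha,-\gamma,-\beta,+\delta]$ (sign $+1$) and $[+\alpha,+\gamma,-\beta,-\delta]$ (sign $-1$) before and after each operation --- but it is where all the content lies, and it is cleanest to organize it as: orientation reversal negates every sign and reverses every cyclic order, then the shift re-selects the correct starting entry so the ``incoming under-edge first'' and ``lesser-of-a-pair-is-positive'' conventions hold again.
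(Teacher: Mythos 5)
Your overall skeleton matches the paper's: first verify that the output of each operation still satisfies properties (1)--(4) of Definition~\ref{def:PDSet}, then argue that the group law of $\Gamma_\mu$ is respected. Where you diverge is in the second half. The paper handles the group-action axioms geometrically: it observes that a quadruple encodes an over-strand and an under-strand with orientations, that this structure carries an evident $\Z_2\cross(\Z_2)^\mu$ action (swap over/under, reverse orientations) compatible with the $S_\mu$ action on components, and that operations (2) and (3) of Definition~\ref{def:PDAct} are exactly the induced effect on labels (illustrated in Figure~\ref{fig:GamAction}); the semidirect-product relations are then inherited from the diagrammatic action rather than checked on labels. You instead propose a purely combinatorial verification --- that each generator is an involution on labels, that generators for distinct indices commute, and that the $S_\mu$ part conjugates the $i$th reversal to the $p(i)$th --- which is more work but more self-contained, and it makes explicit the normalization issue (the shifts by $1$ and by $2$ restoring property (4)) that the paper passes over. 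Your additional check that the associated surface is unchanged is not in the paper's proof and is not needed for the proposition as stated (the action is defined on PD-codes of any genus), though it is relevant to the paper's restriction to spherical codes in this section. One small correction: in step (i) you justify that the label set is preserved by asserting that $p$ only permutes components of equal size under a preferred code; no such hypothesis is in force, and none is needed --- if $p$ permutes components of different lengths the image is simply a valid PD-code on the label set with the block sizes $n_i$ permuted, which Definition~\ref{def:PDSet} permits.
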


\begin{proof}
We must first show that the resulting collection of labels satisfies properties $1-4$ of Definition~\ref{def:PDSet} and thus produces a valid PD-code. Property $1$ persists as the only change to the sign of a label changes the sign of all labels, thus there are still exactly one positive label and one negative label for each edge. Property $2$ persists as positive crossings have a positive second label in the first and last slots, and after shifting to the right we again begin with a positive crossing. Similarly, negative crossing have positive labels in the first two positions and so shifting to the left also gives a quadruple beginning with a positive label. Property $3$ also checks out again because of the global sign change. To see that property $4$ is preserved we first note that since the sign change affects all labels we will still have that non-adjacent labels have opposite sign. However, the sign change does affect the lesser edge is positive condition. Luckily, the permutation $(1)(2 \hspace{0.5cm} n_1)(3 \hspace{0.5cm} n_1-1)\ldots(\frac{n_1+2}{2})$ straightens this out.

We now turn the task of showing that we have a group action. Applying the permutations to the first coordinate of each label is the natural action of the symmetric group on the integers $\{1,\ldots,\mu\}$. The information contained in a quadruple of a PD-code represents an under-crossing and an over-crossing along with orientation information for both. This structure admits a $\Z_2 \cross (\Z_2)^\mu$ action by switching the overcrossing for the undercrossing and reversing the orientations. In the case that both the undercrossing and the overcrossing appear in the same component we have the diagonal action where the second two group elements of $\Z_2 \cross (\Z_2 \cross \Z_2)$ are the same. The effect of this action on the labels in the PD-code is described by $2$ and $3$ of Definition~\ref{def:PDAct} as seen in Figure~\ref{fig:GamAction}.

\begin{figure}
\begin{center}
\begin{overpic}[height=7cm]{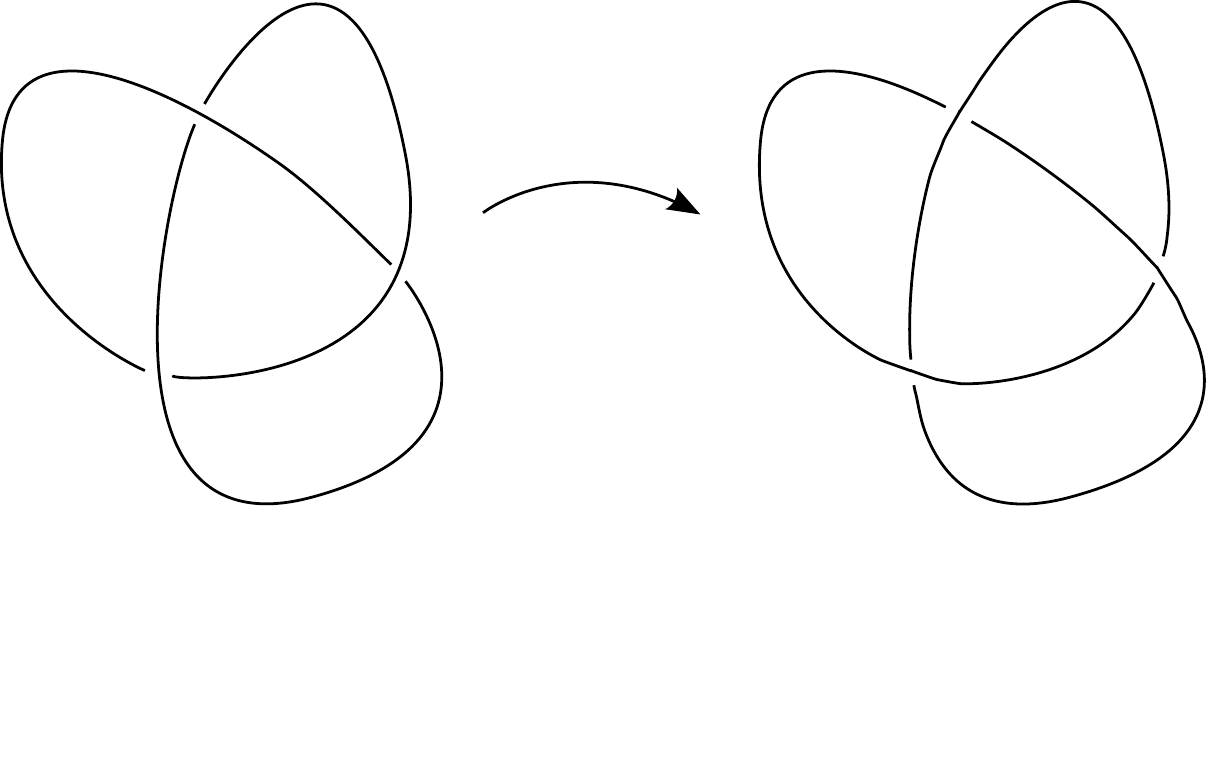}
\put(34,53){$1$}
\put(28,33){$2$}
\put(2,59.5){$3$}
\put(26,50){$4$}
\put(28,20){$5$}
\put(11,46){$6$}
\put(7,13){$\{ [+4,-2,-5,+1],$}
\put(9,8){$[+2,-6,-3,+5],$}
\put(9,3){$[+6,-4,-1,+3]\}$}

\put(97,53){$1$}
\put(91,33){$6$}
\put(64,59.5){$5$}
\put(89,50){$4$}
\put(91,20){$3$}
\put(74,46){$2$}
\put(70,13){$\{ [+6,+3,-1,-4],$}
\put(72,8){$[+2,+5,-3,-6],$}
\put(72,3){$[+4,+1,-5,-2]\}$}

\put(41,51){$(-1,-1)$}

\end{overpic}
\end{center}
\caption{\label{fig:GamAction} The action of $(-1,-1)$ on a diagram of the trefoil. Note that the affect on the PD-codes is as described in Definition~\ref{def:PDAct}.}
\end{figure}


\end{proof}

We now address the issue of the existence of PD-codes which are fixed under the action of an element of $\Gamma_\mu$. While these do exist, we will show that we can produce a PD-code for each link that is not fixed by a non-trivial element of $\Gamma$. In particular, if a PD-code is fixed by some non-trivial element in $\Gamma_\mu$, then we can add a Reidemeister $1$ loop to it so that it still represents the same knot type but is not fixed by any non-identity element of $\Gamma_\mu$. In this way we can produce a collection of PD-codes on which the action of $\Gamma$ is free. \\

\begin{example}
Consider the diagram and PD-code of the Hopf Link shown in Figure~\ref{fig:PDFixed}. The PD-code associated to this diagram is 

$$\{[(1,+2),(2,-2),(1,-1),(2,+1)],[(2,+2),(1,-2),(2,-1),(1,+1)]\}.$$ 

If we act on this PD-code by $\gamma = (1,1,(12))\in \Gamma_2$ the result is the PD-code 
$$\{[(2,+2),(1,-2),(2,-1),(1,+1)],[(1,+2),(2,-2),(1,-1),(2,+1)]\},$$

thus the PD-code is fixed by the action of $\gamma$.
\end{example}

\begin{figure}
\begin{center}
\begin{overpic}[height=6cm]{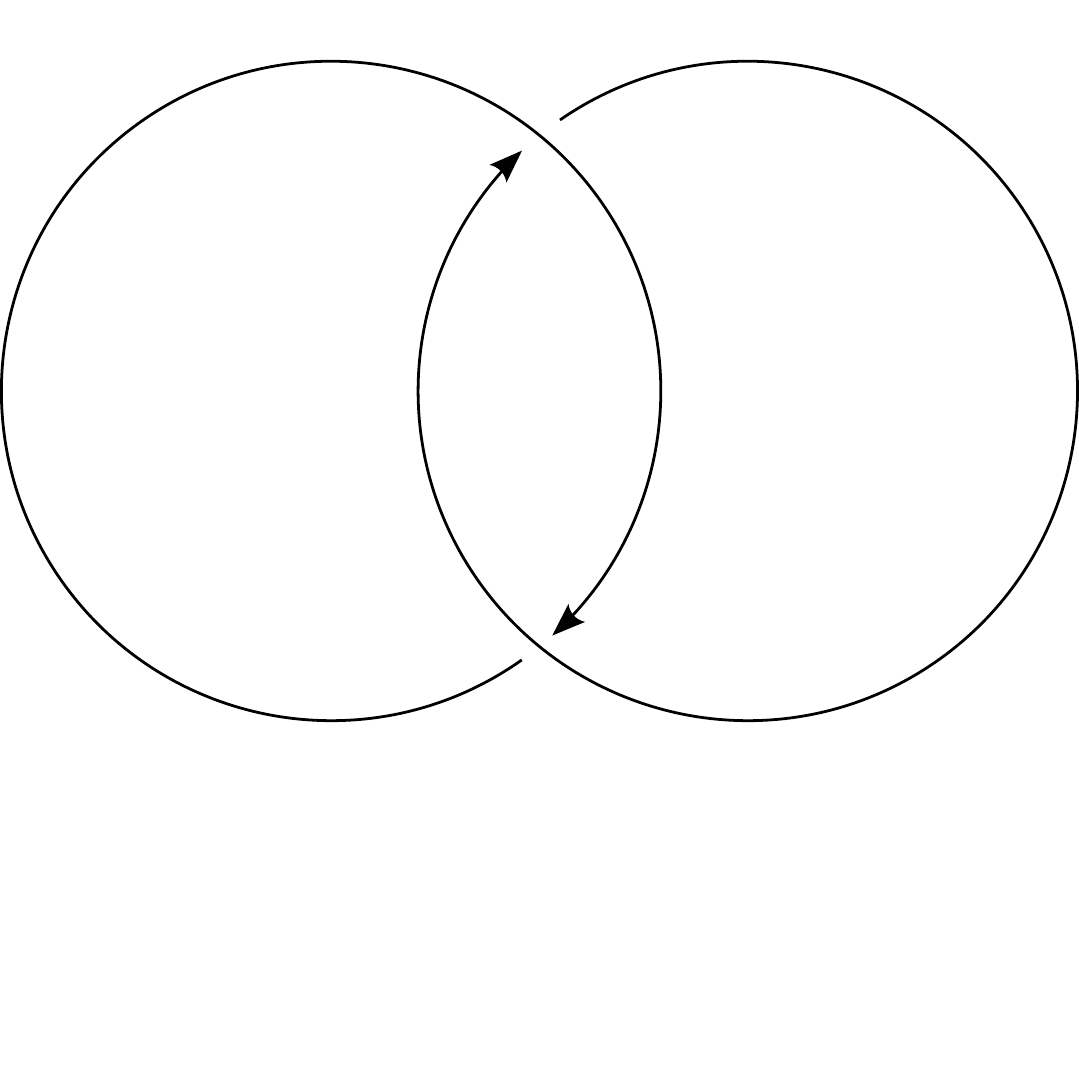}
\put(5,20){$\{[(1,+2),(2,-2),(1,-1),(2,+1)],$}
\put(8.5,11){$[(2,+2),(1,-2),(2,-1),(1,+1)]\}$}
\put(-1,90){$(1,1)$}
\put(60,72){$(1,2)$}
\put(88,90){$(2,1)$}
\put(25,72){$(2,2)$}
\end{overpic}
\end{center}
\caption{\label{fig:PDFixed} An example of a PD-code that is fixed by $(1,1,1,(12))$.}
\end{figure}

\begin{lemma}
In every equivalence class of PD-codes modulo the PD-moves there is a representative which is fixed only by the identity element in $\Gamma_\mu$.
\end{lemma}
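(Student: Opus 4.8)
The plan is to show that adding a single Reidemeister~1 loop to a PD-code destroys any nontrivial symmetry, so that every class modulo the PD-moves contains an asymmetric representative. Fix a PD-code $C$ representing a link with $\mu$ components, and suppose $\gamma=(\epsilon_0,\ldots,\epsilon_\mu,p)\in\Gamma_\mu$ fixes $C$. First I would observe that the set of elements of $\Gamma_\mu$ fixing a given PD-code forms a subgroup, $\operatorname{Stab}(C)\le\Gamma_\mu$, which is finite. The strategy is to choose a component, say the first, on which to perform a Reidemeister~1 move (move~$1(a)$ of Definition~\ref{def:PDMoves}), producing a new PD-code $C'$ with one extra crossing. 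Since the move is a PD-move, $C'$ lies in the same class modulo $P$ as $C$; the content to prove is that $\operatorname{Stab}(C')$ is trivial, or failing that, that after finitely many such additions the stabilizer becomes trivial.

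The key steps, in order. (1) Record how $\Gamma_\mu$ permutes the crossings of a PD-code: by Definition~\ref{def:PDAct}, $\gamma$ relabels crossings but in particular it permutes the \emph{set} of quadruples, and it carries crossings of one component to crossings of another according to $p$ (since the first coordinate of labels is permuted by $p$). In particular the added Reidemeister~1 crossing is a crossing both of whose strands lie in component~$1$; any symmetry of $C'$ must permute the set of such ``self-crossing'' quadruples of component~$1$, and must fix component~$1$ setwise if we have chosen the loop on the unique component whose self-crossing count is thereby made distinctive. (2) Choose the component and the \emph{local} position of the Reidemeister~1 loop (there is freedom in Definition~\ref{def:PDMoves}~$1(a)$ versus $1(b)$, i.e.\ the sign of the kink, and in which edge $i$ of the component it is attached to). Argue that a nontrivial $\gamma$ fixing $C'$ would have to carry this distinguished kink quadruple to itself, and then analyze the two local forms $[+i,-\beta,-\alpha,+\alpha]$ and $[+\alpha,-\alpha,-\beta,+i]$: the operations in Definition~\ref{def:PDAct}(2),(3) act on such a quadruple in a computable way, and one checks that requiring the quadruple (up to the allowed relabeling shifts) to be fixed forces $\epsilon_0=1$, $\epsilon_1=1$. (3) Once $\gamma$ is forced to fix component~$1$ and act trivially on its orientation/over-under data near the kink, peel off: $\gamma$ restricted to the rest is a symmetry of essentially $C$ again, so iterate — attach an analogous kink to a second component to kill any remaining $\epsilon_i$ or any transposition in $p$ among the other components, and so on. Since $\Gamma_\mu$ is finite and each added loop strictly shrinks (or at worst does not enlarge) the stabilizer while killing at least one prescribed generator's worth of symmetry, after at most $\mu+1$ additions the stabilizer is trivial.

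The main obstacle I expect is step~(2): pinning down exactly how Definition~\ref{def:PDAct}'s three operations transform a Reidemeister~1 quadruple once one accounts for the relabeling (the "shift each quadruple to the right/left by $2$" and the palindrome permutation on second coordinates), and verifying that no nontrivial combination of $\epsilon_0,\epsilon_i$ can fix the kink. A subtlety is that the action on PD-codes is by relabeling, so "$\gamma$ fixes $C'$" means fixes it as a set of quadruples \emph{after} the canonical relabeling, and one must be careful that a would-be symmetry is not hiding in a global cyclic relabeling of a component's edges. I would handle this by noting that a Reidemeister~1 kink creates a length-$0$ bigon face (an orbit of $s$ of the form $(\pm\alpha,\mp\alpha)$, by the computation in Example~\ref{ex:trefoil}-style bookkeeping), which is a combinatorial feature that $\gamma$ must respect, and whose incident edge-data rigidifies the relabeling. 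Everything else is bookkeeping with the finite group $\Gamma_\mu$ and the explicit formulas already in hand.
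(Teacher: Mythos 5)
Your approach is the same as the paper's: add Reidemeister~1 kinks via the PD-moves and argue that the resulting kink quadruples cannot be fixed by any nontrivial element of $\Gamma_\mu$. The paper's proof likewise observes that a quadruple of the form $[i,-i,j,k]$ (or its variants) is not fixed by the shift coming from $\epsilon_0=-1$, and that since no other quadruple contains both $+i$ and $-i$ the code is not fixed by $\epsilon_1=-1$; this is your step~(2).

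There are two concrete gaps in your bookkeeping, both of which the paper's (terse) proof handles. First, you never normalize the starting code: your argument that the added kink is ``distinctive'' and must be carried to itself by any symmetry depends on controlling the pre-existing Reidemeister~1 loops, since an old kink quadruple has exactly the same local form as the new one. The paper first \emph{removes all} Reidemeister~1 loops and only then adds new ones, so that the kink quadruples it reasons about are exactly the ones it inserted. Second, your claim that ``after at most $\mu+1$ additions the stabilizer is trivial'' fails for the permutation part of $\Gamma_\mu$: if you attach one kink to each of two components that are exchanged by a symmetry of the underlying code, the two kink quadruples are simply swapped (after the relabeling by $p$) and the exchange survives. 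Killing $p$ requires making the components combinatorially distinguishable, which is why the paper adds $n$ loops to component $n$ --- distinct self-crossing counts force $p=\mathrm{id}$ --- at the cost of order $\mu^2$ added crossings rather than $\mu+1$. Your own remark about choosing ``the unique component whose self-crossing count is thereby made distinctive'' points at exactly this fix, but your iteration as stated does not maintain distinctiveness.
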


\begin{proof}
We first observe that a Reidemeister $1$ loop occurs in a diagram if and only if the corresponding PD-code contains a quadruple with two pairs with the same integer appearing as second components with opposite signs. \\

Suppose we have a PD-code for a knot with exactly one Reidemeister $1$ loop. Then there is a quadruple of the form $[i,-i,j,k]$,$[i,k,j,-j]$,$[i,j,-j,k]$, or $[i,j,k,-i]$. But, none of these are fixed by a shift by $1$. Thus, no PD-code containing a Reidemeister $1$ loop can be fixed by the action of $(-1,1)$. Similarly, since there is no other quadruple contained both positive and negative $i$ (or $j$) this PD-code cannot be fixed by $(1,-1)$. 

Thus, if we have a PD-code for a knot that is fixed by some element of $\Gamma$ we may modify it by first removing all Reidemeister $1$ loops and then adding a single Reidemeister $1$ loop. Given a table of PD-codes for knots we simply traverse the list and fix each code to produce preferred list.

To generalize to links we simple remove all Reidemeister $1$ loops and then add $n$ loops back to component $n$ for each component $1,\ldots,\mu$. 
\end{proof}

\begin{corollary}
There is a preferred table of link diagrams on which the action of $\Gamma$ is free.
\end{corollary}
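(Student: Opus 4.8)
The plan is to read off the corollary from the Lemma just proved, together with the identification of genus-$0$ PD-codes with spherical link diagrams. First I would recall that by Proposition~\ref{prop:PDisD} the set $\PD$ of genus-$0$ PD-codes is in bijection with the non-split link diagrams on $2$-spheres, and that by Lemma~\ref{lemma:PDMoves} this bijection intertwines the PD-moves with the Reidemeister moves; combined with Reidemeister's theorem, equivalence classes of genus-$0$ PD-codes modulo the PD-moves therefore correspond bijectively to link types. For each such class I would apply the preceding Lemma to pick a representative PD-code $C$ whose stabilizer in $\Gamma_\mu$ is the identity, and set $D = L(C)$. Letting $\mathcal{T}$ be the collection of all these diagrams $D$ gives the desired preferred table of link diagrams, one per link type.

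Next I would upgrade ``every $D\in\mathcal{T}$ has trivial stabilizer'' to ``$\Gamma$ acts freely.'' Because $\Gamma_\mu$ acts on individual PD-codes --- equivalently, through the bijection of Proposition~\ref{prop:PDisD}, on individual diagrams --- rather than on link types, the set $\mathcal{T}$ is not itself $\Gamma$-invariant; the object on which $\Gamma$ genuinely acts is the union $\Gamma\cdot\mathcal{T}$ of the orbits of the chosen representatives. This set is a disjoint union of $\Gamma_\mu$-orbits, and the stabilizers of two points in the same orbit are conjugate subgroups of $\Gamma_\mu$; since each $D\in\mathcal{T}$ has trivial stabilizer, so does every point of its orbit. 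Hence $\Gamma_\mu$ acts on $\Gamma\cdot\mathcal{T}$ with all point stabilizers trivial, i.e., freely, which is the assertion.

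I expect essentially all the mathematical content to sit in the Lemma (adding and deleting Reidemeister-$1$ loops to destroy stabilizers) and in Proposition~\ref{prop:PDAct} (well-definedness of the action, and its compatibility with the genus-$0$ hypothesis in force throughout this section), so the corollary itself is bookkeeping. The one genuine point requiring care is the interpretive one above: since the $\Gamma$-action does not descend to an action on links, ``the table on which $\Gamma$ acts freely'' must be understood as the $\Gamma$-saturation of a set of representatives rather than as the set of representatives itself, and I would state this explicitly before invoking the conjugacy-of-stabilizers argument.
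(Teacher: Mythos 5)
Your proposal is correct and follows the same route as the paper, which treats the corollary as an immediate consequence of the preceding Lemma: choose, for each equivalence class, the representative with trivial stabilizer produced there and transfer it to a diagram via Proposition~\ref{prop:PDisD}. Your additional observation --- that the table itself is not $\Gamma$-invariant and that ``free action'' should be read on the $\Gamma$-saturation of the representatives, with triviality of stabilizers propagating along orbits by conjugacy --- is a precision the paper leaves implicit, but it does not change the argument.
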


The preferred list of diagrams from the previous corollary will play a central role in the enhanced prime decomposition theorem for knots \cite{Ma1}.

%
%
%

\begin{figure}
\begin{center}
\begin{overpic}[height=2cm]{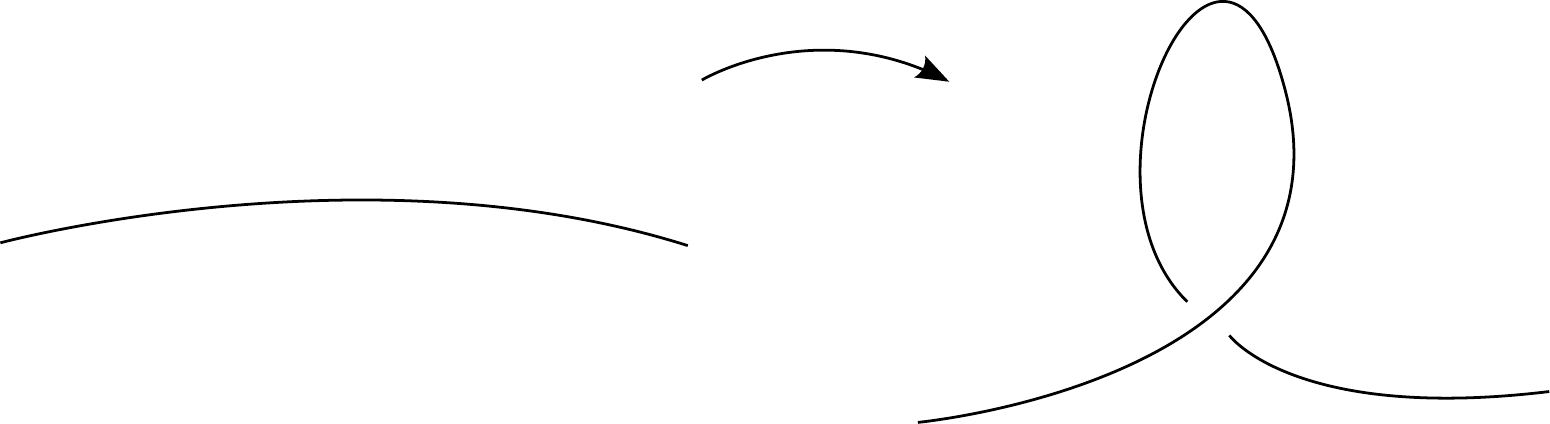}
\put(20,16){$n$}

\put(63,4){$n$}
\put(85,21){$n+1$}
\put(91,4){$n+2$}

\end{overpic}
\end{center}
\caption{\label{fig:Reid1} Adding a Reidemeister $1$ loop to arc $n$.}
\end{figure}

\section[Future Directions]{Future Directions}\label{sect:Future}

We have in many places restricted our attention to the PD-codes whose associated surfaces are spheres because these correspond to classical knots. Relaxing this condition gives rise to the collection of PD-codes whose associated surfaces are orientable, but of arbitrary genus. This set of PD-codes includes the virtual links~\cite{VKnots2} and certainly warrant future study. Another obvious generalization is embed our surfaces in $3$-manifolds other than $S^3$ and develop a combinatorial theory for links in arbitrary $3$-manifolds. 


Sometimes intrinsic symmetries arise as combinatorial automorphisms of the associated cell complex. Such symmetries could be called \textbf{diagrammatic symmetries} and are convenient for finding intrinsic symmetries of links. These methods were used by Cantarella, et al. \cite{VIGRE}, but formal attention should be given to this class of symmetries.

\bibliographystyle{alpha}
\bibliography{refs}

\end{document}